\newenvironment{proof}{\noindent {\bf Proof }}
{\hfill $\bullet$ \vspace{0.25cm}}
\newcommand{\dis}{\displaystyle}
\newtheorem{thm}{Theorem}
\newtheorem{prop}{\indent Proposition}
\newtheorem{lem}{\indent Lemma}
\newcommand{\mmmintone}[1]{{\dis{\int\kern -.36cm-}}_{\kern-.21cm\substack{#1}}\;\;}
\newcommand{\mmmintwo}[2]{{\dis{\int\kern -.43cm-}}_{\kern-.21cm\substack{#1}}^{\substack{#2}}\;\;}
\newcommand{\submint}{{\scriptstyle{\int\kern -.66em -}}}
\newcommand{\submintone}[1]{{\scriptstyle{\int\kern -.66em-}}_{\scriptscriptstyle{\kern-.21em\substack{#1}}}}
\newcommand{\fracmint}{{\textstyle{\int\kern -.88em -}}}
\newcommand{\fracmintone}[1]{{\textstyle{\int\kern -.88em
-}}_{\scriptscriptstyle{\kern-.21em\substack{#1}}}\;}
\title{A free boundary problem with non local interaction}
\author{Jimyeong Lee\footnote{  E-mail: ljm9667@gmail.com } \\Gran Sasso Science Institute, Via. F. Crispi 7, 67100 L'Aquila, Italy}
\date{\today}
\begin{document}

\maketitle

\begin{abstract}
We prove local existence for classical solutions of a free boundary problem which arises in one of the biological selection models proposed by Brunet and Derrida, \cite{BD97} and Durrett and Remenik, \cite{DR11}. The problem we consider describes the limit evolution of branching brownian particles on the line with death of the leftmost particle at each creation time as studied in \cite{DFPS2}. We use extensively results in \cite{Cannon} and \cite{Fasano}.
\end{abstract}
%
%{\it Key words} : \\
%
%{\it AMS Classification}  :

\vskip 1cm

%
%
%\begin{abstract}
%
%\end{abstract}

%{\it Key words} : Hydrodynamic limit.\\

%{\it AMS Classification}  : %60F17; 60K35; 60J25

\section{Introduction}
\label{sec:intro}
In \cite{BD97}
Brunet and Derrida have proposed several models
to study selection mechanisms in biological systems
which give rise to very interesting questions not
only in the applications to biology but also in the areas of
stochastic particle systems and PDE's with free boundaries.
This paper concerns mostly the last issue but it is worth, we think, to
give first a more general overview.

In the line of the Brunet-Derrida's proposal
Durrett and Remenik in \cite{DR11}
have introduced and studied a model of particles on $\mathbb{R}$ each of which, independently from the others, creates at rate 1 a new particle whose position is chosen randomly with probability $p(x,y)dy$, $p(x,y)=p(0,y-x)$, if $x$ is the position of the generating particle. Instantaneously after the creation the leftmost particle is deleted so that the total number of particles is constant.

The
biological interpretation is that the position of a particle is  ``its degree of fitness'', the rightmost particles are the most fitted.  The removal of the leftmost (and hence less fitted) particle gives rise to an improvement of the general fitness of the population and in fact Durrett and Remenik have proved the existence of traveling fronts moving with positive velocity.

The main difficulty in the analysis of the model is the apparently simple deleting mechanism of killing the leftmost particle.  In fact the notion of leftmost particle is highly non local: one needs to know the positions of \underline{all the particles} to determine which is the leftmost one. This is therefore a ``topological'' interaction which cannot be treated with the usual methods of interacting particle systems, it is the analogue in PDE's of free boundary problems in which the domain where the PDE's are defined is itself one of the unknowns,
see for instance the survey by Carinci, De Masi, Giardin\`a and Presutti, \cite{CDGP}, on topological interactions and their relation in the ``hydrodynamic limit'' with free boundary problems.

In the biological applications the size of the population is very large and therefore the main interest is in the analysis of the asymptotic behavior of the particle system in the continuum limit when $N$, (i.e.\  the total number of particles) diverges.
Under suitable assumptions on the initial datum
Durrett and Remenik have  proved that as $N\to\infty$ a limit density exists and it satisfies:
 \begin{equation}
 \label{intro.1}
 \frac{\partial}{\partial t} \rho(x,t)= \int_{X_t}^\infty dy \; p(y,x) \rho(y,t)dy, \quad \rho(x,0)=\rho_0(x)
 \end{equation}
where %$\rho(x,t)$ is the particles density and
$X_t = \inf \{r:\rho(r,t) >0\}$.
Notice that the domain of integration on the right hand side of \eqref{intro.1} is
also an unknown since one needs to know  the whole function $\rho(x,t)$ to determine the value $X_t$ of ``the edge''.   

As it stands \eqref{intro.1} does not select $\rho(x,t)$ because we can give ``arbitrarily'' $X_t$ and still solve \eqref{intro.1}.
To get uniqueness we would need to know a priori $X_t$ which should be the limit position (as $N\to \infty$) of the leftmost particle in the system.  This is in itself an interesting issue  but apparently very difficult to address.  Durrett and Remenik have circumvented the difficulty
by using the other information coming from the particle system, namely that the total number of particles is conserved.  In the continuum limit where $N\to \infty$ the above is reflected into the condition that
 \begin{equation}
 \label{intro.2}
 \int_{X_t}^\infty dx \;   \rho(x,t) =1, \text{for all $t\ge 0$}
 \end{equation}
The pair  \eqref{intro.1}--\eqref{intro.2} is a ``free boundary problem'' but not in its more usual formulation where  \eqref{intro.1} is usually replaced by a parabolic diffusion equation and instead of \eqref{intro.2} there is a condition relating the velocity of the edge to the spatial derivative of the solution at the edge.  This is indeed what happens  in the classical Stefan problem, see for instance the survey by Fasano, \cite{Fasano}.

Under suitable assumptions on the initial datum $\rho_0$ and on the probability kernel $p(x,y)$
Durrett and Remenik have been able to prove that the pair \eqref{intro.1}--\eqref{intro.2} has a unique solution which is the limit density of the particles system.

An important ingredient in the proof is that $X_t$ is monotonically non decreasing, a feature that is clear at the particles level where in fact the position of the leftmost particle if it moves can only increase: it stays put when the new  particle is created to its left (because then this is the one which is deleted) while, in the other case, the previous second leftmost particle becomes the leftmost one.
Such a simplifying effect is not present in the next models we are going to discuss.

In  \cite{DFPS}  De Masi, Ferrari, Presutti and Soprano-Loto (in the sequel DFPS for brevity), have studied the so called N-BBM model, which is an acronym for $N$ branching Browniam motions.  The selection mechanism in the N-BBM model
is similar to the Durrett-Remenik one: once a new particle is created the leftmost one is deleted.  There are however two main differences: the particles move as independent Brownian motions and the new particle is created at exactly the same position of the generating one (the previous kernel $p(x,y)$ becomes a Dirac delta, $\delta(x-y)$).  Biologically this means that the individual fitness changes randomly in time and the duplicating processes are exact, the fitness of the son is exactly equal to that of the father.

Believing that the  Durrett-Remenik arguments extend to this case one would conjecture that the limit density $\rho(x,t)$ satisfies the equation
 \begin{equation}
 \label{intro.3}
 \frac{\partial}{\partial t} \rho(x,t)= \frac 12\frac{\partial^2}{\partial x^2} \rho(x,t)+ \rho(x,t), \quad x>X_t,\quad \; \rho(x,0)=\rho_0(x)
 \end{equation}
(where again  $X_t = \inf\{x:\rho(x,t)>0\}$).  \eqref{intro.3} is in fact obtained from \eqref{intro.1}  by adding on the right hand side
the Laplacian which takes into account the Brownian motion of the particles while the last term $\rho(x,t)$ is the right hand side of
 \eqref{intro.1} when $p(x,y)=\delta(x-y)$.

The free boundary problem \eqref{intro.3}--\eqref{intro.2}  is ``incomplete''
because even if $X_t$ is known yet \eqref{intro.3} does not have
a unique solution: we must also give the value of $\rho(x,t)$ at the edge $X_t$.

The natural choice would be to derive it as the limit particles density at the edge which is still not at all easy due to the poor control of the position of the leftmost particle.  However taking into account the regularizing effect of the heat diffusion one may suppose that
\begin{equation}
 \label{intro.4}
 \rho(X_t,t)= 0 \quad \text{at all times $t\ge 0$}
 \end{equation}
(notice  that in the Durrett-Remenik model \eqref{intro.4} does not hold, recall however that in \eqref{intro.1} there is no Laplacian !).

DFPS have proved (under suitable assumptions on the initial datum) that in the limit $N\to \infty$ the particle density has a limit $\rho(x,t)$ for any $t\ge 0$. It is also proved that  $\rho(x,t)$ satisfies \eqref{intro.3}--\eqref{intro.2}--\eqref{intro.4} if this has a ``regular'' solution.
As far as we know there is only a  ``local'' existence theorem
under suitable assumptions on the initial datum (as discussed in the next section) which therefore coincides with the limit density of the N-BBM system.
From \cite{DFPS} we know that $\rho(x,t)$ is well defined at all times, but it is not clear if at times larger than for local existence it is still a solution of \eqref{intro.3}--\eqref{intro.2}--\eqref{intro.4} at least in a ``weak sense''.

Notice that uniqueness in the local existence theorem follows from \cite{DFPS} as DFPS have shown that any ``smooth solution''  is necessarily equal to the limit density of the particles system and hence unique.

The question of traveling fronts  is of great interest: in
\cite{BBD17}
Berestycki, Brunet and Derrida have considered  \eqref{intro.3} complemented by conditions on the values of the solution and its derivative at the edge.  They were mainly interested in the precise asymptotics of the velocity of the front underlying connections with the Fisher-KPP type fronts, see also
\cite{GJ} where
Groisman and  Jonckheere discuss   front propagation and quasi-stationary distributions.

The analysis of the front before the limit $N\to \infty$ is also particularly interesting, see for instance the work of Maillard, \cite{maillard}  on its large fluctuations.

\medskip

We are mainly interested here in the existence of solutions for a free boundary problem introduced in \cite{DFPS2}.  The particles system is an extension of the N-BBM model obtained by making the branching mechanism non local as in the case considered by Durrett and Remenik. In  \cite{DFPS2} the conjectured evolution equation is in fact
 \begin{equation}
 \label{intro.5}
 \frac{\partial}{\partial t} \rho(x,t)= \frac 12\frac{\partial^2}{\partial x^2} \rho(x,t)+\int_{X_t}^\infty dy \; p(y,x) \rho(y,t)dy, \quad x>X_t,\quad \; \quad \rho(x,0)=\rho_0(x)
 \end{equation}
which is a combination of
\eqref{intro.1} (for the branching) and \eqref{intro.3} for the Brownian diffusion.  The results for the N-BBM model have been extended  in \cite{DFPS2} to this case, a limiting density $\rho(x,t)$ exists and it is uniquely defined, moreover if
there is a smooth solution of the free boundary problem \eqref{intro.5}--\eqref{intro.2}--\eqref{intro.4} then this is the limit particles density of the model.  As mentioned the proof of local existence of smooth solutions for \eqref{intro.5}--\eqref{intro.2}--\eqref{intro.4} is the main result in this paper, the precise statement is the following.

\vskip.5cm

\nopagebreak
{\bf Assumptions.}

\begin{itemize}

\item {\em On the initial datum.} We suppose that: $\rho_0(x)=0$ for $x\le 0$, it is in $C^3$ for $x\ge 0$ and it has compact support.  Moreover
     \begin{equation}
 \label{intro.6}
 \frac{d}{d x} \rho_0(x)\Big |_{x=0^+}= 2\int_0^\infty\int_0^\infty \, \rho_0(y) p(y,x)dydx >0
 \end{equation}

    \item {\em On the kernel $p(x,y)$.}   We suppose that: $p(x,y)=p(0,y-x)$, $p(0,x)$ is non negative with compact support, it is in $C^1$  and its integral is equal to 1 (i.e.\ $p(x,y)$ is a transition probability kernel).

\end{itemize}

\medskip

{\bf Remarks.}  By the first assumption   $X_0=0$: by translation invariance there is no loss in generality by fixing the edge initially at 0.  The regularity assumption on $\rho_0$ comes from the necessity of controlling the velocity of the edge which involves, as we will see, the second derivative of $\rho(x,t)$ with respect to $x$.  Finally the ``strange condition \eqref{intro.6}" is required to avoid initial layer problems as discussed in the next section.

\vskip .5cm

\begin{equation}
\left\{ \,
\begin{IEEEeqnarraybox}[][c]{l?s}
\IEEEstrut
\displaystyle \frac{\partial}{\partial t} \rho(x,t)= \frac 12\frac{\partial^2}{\partial x^2} \rho(x,t)+\int_{X_t}^\infty dy \; p(y,x) \rho(y,t)dy, \quad x>X_t,\\
\displaystyle \rho(x,0)=\rho_0(x), \quad x>0, \\
\displaystyle  \rho(X_t,t)= 0 \quad \text{at all times $t\ge 0$}, \\
 \displaystyle  \int_{X_t}^\infty dx \;   \rho(x,t) =1, \text{for all $t\ge 0$}.
\IEEEstrut
\end{IEEEeqnarraybox}
\right.
\label{intro.7}
\end{equation}

\begin{thm}
\label{thmintro.1}
Under the above assumptions there are $T>0$, $X_t$, $t\in[0,T]$, and $\rho(x,t)$, $x \ge X_t$, $t\in[0,T]$, such that:

\begin{itemize}

\item $X_0=0$, $X_t$ is differentiable and its derivative $V_t$ is H\"older continuous with exponent $1/2$.

\item  $\rho(x,t)$ is $C^{3,1}$ (three derivatives in $x$ and one in $t$) in the domain
${x> X_t, t>0}$.

\item  The pair $(X_t,\rho(x,t))$, ${x\ge X_t, t\in[0,T]}$, solves  the free boundary problem \eqref{intro.7}.%\eqref{intro.5}--\eqref{intro.2}--\eqref{intro.4}.

\end{itemize}

\end{thm}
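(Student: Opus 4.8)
The plan is to treat \eqref{intro.7} by the classical heat‑potential method for one‑phase Stefan problems, adapting the techniques of \cite{Cannon} and \cite{Fasano} to the non local source term, which I expect to play only the role of a lower‑order perturbation. The first observation is that, by \eqref{intro.4}, the mass constraint can be turned into a boundary condition. Writing $V_t=\dot X_t$ and $N\rho(x,t):=\int_{X_t}^\infty p(y,x)\rho(y,t)\,dy$, and using $\rho(X_t,t)=0$ together with the decay of $\rho$ at $+\infty$ (inherited from the compact support of $\rho_0$ and of $p$), one gets
\begin{equation}
\label{plan1}
\frac{d}{dt}\int_{X_t}^\infty\rho(x,t)\,dx=-\tfrac12\,\partial_x\rho(X_t,t)+\int_{X_t}^\infty\rho(y,t)\,\beta_t(y)\,dy,\qquad \beta_t(y):=\int_{X_t}^\infty p(y,x)\,dx,
\end{equation}
so that the conservation of mass \eqref{intro.2} is equivalent to the non local Neumann condition $\tfrac12\,\partial_x\rho(X_t,t)=\int_{X_t}^\infty\rho(y,t)\beta_t(y)\,dy$. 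Hence \eqref{intro.7} is a one‑phase free boundary problem of Stefan type: the two conditions $\rho(X_t,t)=0$ and this Neumann condition sit on the free boundary $x=X_t$, while the velocity $V_t$ is the compensating unknown. Evaluated at $t=0$ (where $X_0=0$) the Neumann condition is exactly the ``strange condition'' \eqref{intro.6}; I read it as the compatibility at the corner $(x,t)=(0,0)$ that excludes an initial mass layer and keeps $\partial_x\rho(X_t,t)$ bounded away from $0$ for small $t$, while the matching first‑order compatibility of the Dirichlet condition fixes the initial velocity, $V_0=-\bigl(\tfrac12\rho_0''(0^+)+\int_0^\infty p(y,0)\rho_0(y)\,dy\bigr)/\rho_0'(0^+)$.

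Next I would reduce to an integral equation. Let $K(x,t)=(2\pi t)^{-1/2}e^{-x^2/(2t)}$ be the fundamental solution of $\partial_t-\tfrac12\partial_x^2$. Green's identity in $\{(y,\tau):X_\tau<y,\ 0<\tau<t\}$, together with $\rho(X_\tau,\tau)=0$, represents the solution as
\begin{equation}
\label{plan2}
\rho(x,t)=\int_0^\infty K(x-y,t)\rho_0(y)\,dy+\int_0^t\!\!\int_{X_\tau}^\infty K(x-y,t-\tau)\,N\rho(y,\tau)\,dy\,d\tau-\tfrac12\int_0^t K(x-X_\tau,t-\tau)\,\psi(\tau)\,d\tau,
\end{equation}
where $\psi(\tau):=\partial_x\rho(X_\tau,\tau)$ is the unknown boundary flux. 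Letting $x\downarrow X_t$ in \eqref{plan2} (a single‑layer heat potential is continuous across the layer) turns $\rho(X_t,t)=0$ into one integral relation; differentiating \eqref{plan2} in $x$, using the jump relation for the $x$‑derivative of the single‑layer potential and then the Neumann condition above, turns the latter into a second one. The result is a closed system of Volterra‑type integral equations for the pair $(V,\psi)$. The non local kernel enters only through $N\rho$, and since $p(y,\cdot)$ is $C^1$ with compact support $N$ maps bounded functions to bounded functions (it even gains a derivative), so it contributes only a lower‑order, smoothing inhomogeneity — also in \eqref{plan2} itself, which is then solved for $\rho$ by a Neumann series converging for short times.

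Finally I would solve the $(V,\psi)$ system by a contraction argument on a small ball of $\{V\in C^{1/2}([0,T]):V(0)=V_0\}$, together with a matching space for $\psi$, for $T$ small: the heat‑potential estimates of \cite{Cannon,Fasano} give the continuity — with operator norms small with $T$ — of the relevant boundary traces as functions of $(V,\psi)$, and the $C^1$, compactly supported kernel makes the non local contributions contractive as well, while \eqref{intro.6} keeps the denominator $\partial_x\rho(X_t,t)$ away from $0$. The regularity of $V_t$ naturally produced by the single‑layer potential — one time derivative of $\rho$ at the edge amounting, parabolically, to two space derivatives — is H\"older‑$\tfrac12$ in $t$, and this is precisely why $\rho_0\in C^3$ is required. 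Given the fixed point $(V,\psi)$, set $X_t=\int_0^tV_s\,ds$, recover $\rho$ from \eqref{plan2}, and verify the conclusions: $X_0=0$ and the first two lines of \eqref{intro.7} hold by construction, $\rho(X_t,t)=0$ and the mass constraint hold because $(V,\psi)$ solves the integral system, $V_t\in C^{1/2}$, and interior parabolic regularity together with $\rho_0\in C^3$ and the corner compatibility give $\rho\in C^{3,1}$ on $\{x>X_t,\,t>0\}$. The hard part, I expect, is not the non local term but the analysis of the heat potentials near the corner $(0,0)$: the sharp estimates needed both to close the contraction and to obtain $V_t\in C^{1/2}$ are exactly where the $C^3$ smoothness of $\rho_0$ and the compatibility \eqref{intro.6} are spent, and this is where \cite{Cannon} and \cite{Fasano} do the work.
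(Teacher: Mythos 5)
Your reduction of the mass constraint to the nonlocal Neumann condition $\tfrac12\,\rho_x(X_t,t)=\int_{X_t}^\infty\rho(y,t)\beta_t(y)\,dy$ and your reading of \eqref{intro.6} as the corner compatibility are correct, but there is a genuine gap at the crux of the argument: your system never determines $V$ in a usable way. Unlike the classical Stefan problem \eqref{intro.7.2}, here \emph{neither} free-boundary condition contains $\dot X_t$: you prescribe the value ($\rho=0$) and the flux ($\psi$ given as a functional of $\rho$) on $x=X_t$, i.e.\ Cauchy data on the free boundary. In your scheme the only equation in which the boundary position appears is the Dirichlet trace identity obtained by letting $x\downarrow X_t$ in \eqref{plan2}; this is in effect a first-kind relation in $X$ (the dependence on $X_t$ is through smooth kernels, with no jump term producing $V_t$ explicitly), so there is no Volterra equation of the second kind for $V$ and no reason the map you sketch is a contraction with norm small in $T$. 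To extract $V_t$ one must differentiate $\rho(X_t,t)=0$ along the boundary, which gives $V_t=-\bigl(\tfrac12\rho_{xx}(X_t,t)+\int p(y,X_t)\rho(y,t)\,dy\bigr)/\rho_x(X_t,t)$ (equivalently \eqref{intro.14}); this is exactly the step your sketch bypasses, and it is where all the real work sits: one needs $\rho_{xx}$ (indeed a bound on the third spatial derivative, Lemma \ref{lem.3}, which is why $\rho_0\in C^3$ is assumed and which the paper presents as an improvement of estimate (4.24) of \cite{fasanoprimicerio2}), a H\"older-$\tfrac12$ estimate for $\rho_{xx}$ at the edge (Lemma \ref{lem.4}), and the nondegeneracy \eqref{intro.6} to keep the denominator away from zero (Lemma \ref{lem.5}). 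Asserting that the single-layer potential ``naturally produces'' $V\in C^{1/2}$ and that \cite{Cannon,Fasano} supply contraction constants small in $T$ does not close this; note that even with the explicit velocity formula the paper does \emph{not} obtain a contraction for the velocity map: Lemma \ref{lem.6} only gives Lipschitz continuity with a constant that is not small, and the fixed point is produced by Schauder's theorem on the convex compact set $\Sigma(A,T)$ of uniformly H\"older-$\tfrac12$ velocities with prescribed $V_0$.

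For comparison, the paper's route also differs structurally from yours in two respects you may want to adopt. First, it works in the frame of the edge (change of variables $x\mapsto x-X_t$), so the domain is the fixed quarter plane and the unknown boundary enters only through the drift $V_t\partial_x$; this avoids heat potentials on a moving curve altogether and makes the nonlocal term harmless, at the price of requiring $V\in C^{1/2}$ from the start. Second, it decouples the problem: for a given $V\in\Sigma(A,T)$ it solves first for $\rho$ (Proposition \ref{prop.1}) and then for $u$ with Dirichlet datum $u(0,t)=2\int\!\!\int\rho\,p$ (Proposition \ref{prop.2}), \emph{without} imposing $u=\rho_x$; the identity $u=\rho_x$, and with it the boundary flux condition and the conservation of mass, are recovered only a posteriori at the fixed point (Section \ref{sec.6}), which is also how the $C^{3,1}$ regularity and the final verification of \eqref{intro.7} are obtained. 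If you want to salvage your potential-theoretic set-up, you must replace the trace identity by the differentiated (velocity) equation and then supply the analogues of Lemmas \ref{lem.2}--\ref{lem.4}; as written, the proposal is missing the mechanism that determines the free boundary.
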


\medskip
In the next section we  outline the strategy of the proof and  discuss what known in the literature. In Section 3 and 4, we state the main result and prove it. In Section \ref{sec.6} we give the proof of Theorem \ref{thmintro.1} and in the last section we discuss the extension to the other free boundary problems mentioned in this introduction.

\vskip2cm

\section{Strategy of proof}
\subsection{classical case}
We suppose tacitly hereafter that the initial position of the edge is $X_0=0$, then a
simple version of the classical Stefan free boundary problem is
    \begin{equation}
 \label{intro.7.0}
\rho_t = \frac 12 \rho_{xx}, \quad x \ge X_t, t\ge 0
 \end{equation}
    \begin{equation}
 \label{intro.7.1}
\rho(x,0)=\rho_0(x), \;\quad \rho(X_t,t) =  0
 \end{equation}
    \begin{equation}
 \label{intro.7.2}
\frac {dX_t}{dt} =- \rho_x(X_t,t)
 \end{equation}
where $X_t$ and $\rho(x,t)$ are the unknowns, to simplify notation space and time derivatives  are denoted hereafter by adding suffices.

The classical strategy for solving such a free boundary problem
is to fix a curve  $X_t$, solve \eqref{intro.7.0}-- \eqref{intro.7.1}, call $\rho(x,t)$ such a solution.  Define then a new curve  $\tilde{X}_t$ by setting its velocity $\tilde{V}_t= - \rho_x(X_t,t)$: this defines a map $\psi$: $X_t \to \tilde{X}_t$ and we look for a fixed point of $\psi$.  In the above case the best is to work on the compact space of uniformly Lipschitz curves $X_t$: one can then prove that for small $t$ the map $\psi$ is a contraction and then  the existence of a fixed point $X_t$ follows.  To conclude one must then show
that the solution of \eqref{intro.7.0}-- \eqref{intro.7.1} with $X_t$ the fixed point satisfies   \eqref{intro.7.2} as well.  See for instance \cite{Fasano}.

The N-BBM problem looks similar.  The differences are: (i)\; in \eqref{intro.7.0} there is an additional term on the right hand side, (ii)\;
there is the additional constraint that the mass is conserved, \eqref{intro.2}; (iii)\; we miss
the condition \eqref{intro.7.2}. (i) can be dealt with by changing variables: $\rho(x,t) \to w(x,t):=e^{-t} \rho(x,t)$.
 Conservation of mass can be written in differential form and a relation for the velocity of the front can be obtained by differentiating with respect to time \eqref{intro.4}.  One then obtains the system of equations:
      \begin{equation}
 \label{intro.7.0.0}
w_t = \frac 12 w_{xx}, \quad x \ge X_t, t\ge 0
 \end{equation}
    \begin{equation}
 \label{intro.7.1.0}
w(x,0)=\rho_0(x), \;\quad w(X_t,t) =  0
 \end{equation}
    \begin{equation}
 \label{intro.7.3}
w_x(X_t,t) = 2e^{-t}
 \end{equation}
  \begin{equation}
   \label{intro.7.4}
\frac {dX_t}{dt} =- \frac 14 e^{-t} w_{xx}(X_t,t)
 \end{equation}
Define next $u(x,t):= w_x(x,t)$ and ignore \eqref{intro.7.1.0}, we then get
     \begin{equation}
 \label{intro.7.0.0.0}
u_t = \frac 12 u_{xx}, \quad x \ge X_t, t\ge 0
 \end{equation}
%    \begin{equation}
% \label{intro.7.1.0.0}
%u(x,0)=\rho_0, \;\quad \ga(X_t,t) =  0
% \end{equation}
    \begin{equation}
 \label{intro.7.3.0}
u(X_t,t) = 2e^{-t}
 \end{equation}
  \begin{equation}
   \label{intro.7.4}
\frac {dX_t}{dt} =- \frac 14 e^{-t} u_{x}(X_t,t)
 \end{equation}
The free boundary problem for $(X_t,u(x,t))$ looks like the classical Stefan problem
 \eqref{intro.7.0}--\eqref{intro.7.1}--\eqref{intro.7.2}: it requires an additional analysis which is contained in \cite{fasanoprimicerio} (for a more general system of equations), see also \cite{JMLEE2} where the above case is treated explicitly.

\subsection{non local case}
In the case we are mainly interested here there is a non local term  and this prevents us to use at least directly the above approach.  An alternative way to study the free boundary problems is to look at the evolution from the edge. This is often done at the particles level to study the shape and structure of the traveling waves independently of their location.
% A derivation of the hydrodynamic equations for the N-BBM model and its non local variant is however missing. 
  We suppose tacitly hereafter that the initial position of the edge is $X_0=0$. The advantage of studying the free boundary problem in the frame where the edge is always at the origin is that by its very definition the spatial domain is fixed, it is no longer one of unknowns.  The difficulties however have not disappeared as in the evolution equations appears a drift term which depends on the velocity of the edge.  The natural setting from the problem requires now that the motion of the edge is $C^1$ (we will also require that the derivative $V_t$ is H\"older continuous with exponent $1/2$). More precisely we call $u(x,t) =  \rho_x(x,t)$ and then 
 change variables: $\rho(x,t)\to \rho(x-X_t,t)$, $u(x,t)\to u(x-X_t,t)$.  By an abuse of notation we denote by the same symbols $\rho$ and $u$ the new functions and we  get  the following system of equations:
 \begin{equation}
 \label{intro.15}
   \rho_t(x,t)= \frac 12 \rho_{xx}(x,t)+V_t  \rho_x(x,t)+\int_{0}^\infty dy \; p(y,x) \rho(y,t)dy, \quad x>0
 \end{equation}
 \begin{equation}
 \label{intro.16}
 \rho(x,0)=\rho_0(x),\; x \ge 0,\quad \rho(0,t)= 0,\; t\ge 0
 \end{equation}
  \begin{equation}
 \label{intro.17}
  u_t(x,t)= \frac 12  u_{xx}(x,t)+V_t u_x(x,t)+\int_{0}^\infty dy \; p(y,x) u(y,t)dy, \quad x>0
 \end{equation}
  \begin{equation}
 \label{intro.18}
 u(x,0) =u_0(x):=
 \frac {d\rho_0(x)}{dx}  \text{\;\;for $x\ge 0$ } %and $u_0(x)=0$ for $x\le 0$}
  \end{equation}
    \begin{equation}
 \label{intro.19}
 u(0,t) = 2\int_{0}^\infty  \int_{0}^\infty dy\, \rho(y,t) p(y,x)dx
  \end{equation}
  We also have:
     \begin{equation}
 \label{intro.14}
 u(0,t)V_t  = - \frac 12  u_x(0,t) +\int_{0}^\infty\int_{0}^\infty dy \,   u(y,t)p(y,x)dydx
  \end{equation}
  which is obtained by differentiating \eqref{intro.4} with respect to time.
  
  In the way the above equations have been derived $u$ is the spatial derivative of $\rho$, but we will regard the  
  system \eqref{intro.15} to \eqref{intro.19} without imposing such relation.  Namely we fix a function $V_t$ which is H\"older continuous with exponent $1/2$; we then solve \eqref{intro.15}-\eqref{intro.16} and find $\rho(x,t)$.  We then solve \eqref{intro.17}-\eqref{intro.19} with $\rho(x,t)$ as determined above and thus get $u(x,t)$.  With such $\rho(x,t)$ and $u(x,t)$ we determine a new speed $V_t$ via
  \eqref{intro.14} and thus get an iterative scheme.  We will prove that all this can be done and the iterative scheme has a fixed point.  For such fixed point we re-establish the identity that $u$ is the spatial derivative of $\rho$ and then get a proof of Theorem \ref{thmintro.1}.
  
  The change of variables which fixes the position of the edge has been used in \cite{fasanoprimicerio2}.  Our approach is similar but we have extra difficulties for the presence of the non local term. Moreover, \cite{fasanoprimicerio2} relies on the result of \cite{LSU} which does not include our case since its initial and boundary conditions are stronger. As a further outcome of our analysis we prove Lemma \ref{lem.3} that is an improved version of estimate (4.24) of \cite{fasanoprimicerio2}. 
  
 \vskip 2cm

\section{Main results}
Let us denote by $K$ a Gaussian density function and by $G$ a Green function for a quarter plane as
\begin{eqnarray*}
\displaystyle K(x,t;\xi,\tau)=\frac{1}{\sqrt{2\pi(t-\tau)}}\exp\left\{-\frac{|x-\xi|^{2}}{2(t-\tau)}\right\},\ G(x,t;\xi,\tau)=K(x,t;\xi,\tau)-K(x,t;-\xi,\tau).
\end{eqnarray*}
From now on, we write positive constants as $\{c_i\}_{i\ge 1}$ and use the following facts extensively
\begin{eqnarray*}
\displaystyle \int_{-\infty}^{\infty}K(x,t;\xi,\tau)dx=1,\ \  \int_{-\infty}^{\infty}|K_x(x,t;\xi,\tau)|dx= \frac{\sqrt{2}}{\sqrt{\pi}}\frac{1}{\sqrt{t-\tau}}.
\end{eqnarray*}

From now on, $\displaystyle\lVert\ \cdot \ \rVert_{\infty}$ is $\displaystyle L^{\infty}$-norm in $\displaystyle D_{T}$, where $\displaystyle D_{T}=\{(x,t): 0<x,\ 0<t\leq T\}$.
 \begin{prop}
\label{prop.1}
Let $\displaystyle V\in C([0,T])$ where $T>0$. There is  a unique solution $\displaystyle\rho\in C(\overline{D_{T}})$ where $\displaystyle D_{T}=\{(x,t): 0<x,\ 0<t\leq T\}$ with $\displaystyle\rho_x\in C(\overline{D_{T}})$ and $\displaystyle\lVert\rho\rVert_{\infty}+\lVert\rho_x\rVert_{\infty}<\infty$ which satisfies \eqref{intro.15}-\eqref{intro.16}.
\end{prop}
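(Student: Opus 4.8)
The plan is to recast \eqref{intro.15}--\eqref{intro.16} as a linear Volterra integral equation and to solve it by a contraction argument in the Banach space $X_T$ of functions $\rho$ with $\rho,\rho_x\in C(\overline{D_T})$ and $\|\rho\|_{X_T}:=\|\rho\|_\infty+\|\rho_x\|_\infty<\infty$. Writing $\Phi(x,t):=\int_0^\infty G(x,t;\xi,0)\rho_0(\xi)\,d\xi$ and $(P\rho)(x,t):=\int_0^\infty p(y,x)\rho(y,t)\,dy$, a $\rho\in X_T$ that is a classical solution of \eqref{intro.15}--\eqref{intro.16} in $D_T$ necessarily satisfies $\rho=\mathcal T\rho$, where
$$(\mathcal T\rho)(x,t)=\Phi(x,t)+\int_0^t\!\!\int_0^\infty G(x,t;\xi,\tau)\big[V_\tau\,\rho_\xi(\xi,\tau)+(P\rho)(\xi,\tau)\big]\,d\xi\,d\tau .$$
Since $G$ vanishes at $x=0$ the boundary condition $\rho(0,t)=0$ is automatic and the initial condition is built in; and because $\rho_0(0)=0$ the odd extension of $\rho_0$ is $C^1$, so $\Phi$ and $\Phi_x$ are bounded and extend continuously to $\{x=0\}\cup\{t=0\}$, i.e.\ $\Phi\in X_T$.

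Next I would estimate the linear part $\mathcal L\rho:=\mathcal T\rho-\Phi$. Using $\int_0^\infty|G(x,t;\xi,\tau)|\,d\xi\le1$ and $\int_0^\infty|G_x(x,t;\xi,\tau)|\,d\xi\le c_1(t-\tau)^{-1/2}$ (both following from the facts about $K$ recalled above), together with $\|P\rho\|_\infty\le\|\rho\|_\infty$ (the integral of $p$ is $1$) and $\|(P\rho)_x\|_\infty\le c_2\|\rho\|_\infty$ (as $p(0,\cdot)\in C^1$ has compact support), one obtains $\|\mathcal L\rho\|_\infty\le c_3T(1+\|V\|_\infty)\|\rho\|_{X_T}$ and, putting the $x$-derivative on $G$, $\|(\mathcal L\rho)_x\|_\infty\le c_4\sqrt T(1+\|V\|_\infty)\|\rho\|_{X_T}$; standard heat-potential estimates also give $\mathcal L\rho,(\mathcal L\rho)_x\in C(\overline{D_T})$, so $\mathcal L$ maps $X_T$ into itself with operator norm $\le c_5\sqrt T(1+\|V\|_\infty)$. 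Choosing $T_0$ small, depending only on $\|V\|_\infty$ and on $p$, makes $\mathcal T$ a contraction on $X_{T_0}$, and the Banach fixed point theorem provides a unique $\rho\in X_{T_0}$ with $\rho=\mathcal T\rho$.

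Because $\mathcal T$ is affine in $\rho$ for fixed $V$, the local existence time $T_0$ does not depend on the data, so restarting from $\rho(\cdot,T_0)$ — which lies in $C^1([0,\infty))$ and vanishes at $x=0$, hence is an admissible datum — and iterating finitely many times yields the solution on all of $[0,T]$. That this $\rho$ solves \eqref{intro.15} classically follows by bootstrapping: the source $g:=V_\tau\rho_\xi+P\rho$ is bounded, so by the classical theory of heat potentials the $x$-derivative of the volume potential in $\mathcal L\rho$ is locally Hölder in $x$ uniformly in $t$, while $\Phi_x$ is Lipschitz in $x$; hence $\rho_x$, and therefore $g$ (also using that $P\rho$ is Lipschitz in $x$ and $V\in C([0,T])$), is locally Hölder in $x$ and continuous on $\overline{D_T}$. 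Then, by \cite{Cannon}, $\rho_{xx}$ and $\rho_t$ exist and are continuous in $D_T$ and \eqref{intro.15} holds, \eqref{intro.16} being already incorporated.

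For uniqueness I would show that any $\rho$ in the stated class satisfies $\rho=\mathcal T\rho$, via a Green's-identity argument: multiply \eqref{intro.15} by $G(x,t;\cdot,\cdot)$ and integrate by parts in $(\xi,\tau)$ over $\{\xi>0,\ 0<\tau<t\}$ (truncating near $\tau=t$ and passing to the limit); the boundary terms at $\xi=0$ vanish because $G(x,t;0,\tau)=0$ and $\rho(0,\tau)=0$, those at $\xi=\infty$ vanish by the Gaussian decay of $G$ against the bounded $\rho,\rho_x$, $G$ solves the backward equation in $(\xi,\tau)$, and interior parabolic estimates legitimize the computation away from $\tau=0$. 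The contraction property, applied successively on the intervals of the restarting scheme, then forces uniqueness. The bulk of the work — and the main obstacle — is this parabolic/potential-theoretic part: extracting classical interior regularity and continuity up to the corner $(0,0)$ from the weak hypotheses on $\rho_0$ and the merely continuous drift $V$ (for which \cite{LSU} is unavailable, as noted above), all carried out through the explicit representation via $G$; the non-local term $P\rho$, being a smoothing operator, is comparatively harmless.
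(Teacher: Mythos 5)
Your proposal is correct and follows essentially the same route as the paper: both recast \eqref{intro.15}--\eqref{intro.16} as the Volterra integral equation with the quarter-plane Green's function $G$ and run a contraction argument in the norm $\lVert\rho\rVert_{\infty}+\lVert\rho_x\rVert_{\infty}$ on a short time interval (gaining the factor $\sqrt{\eta}$ from $\int_0^\infty|G_x|\,d\xi\le c(t-\tau)^{-1/2}$), then continue to $[0,T]$ because the contraction constant depends only on $\lVert V\rVert_\infty$. Your restart from $\rho(\cdot,T_0)$ is just a cosmetic variant of the paper's map $\mathcal{H}$, and the bootstrap/Green's-identity details you supply are exactly what the paper delegates to Theorem 20.3.1 of \cite{Cannon}.
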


\begin{proof}
Similarly as in Theorem 20.3.1 of \cite{Cannon}, let us define a mapping $\mathcal{F}: \mathscr{B_\eta}\rightarrow \mathscr{B_\eta}$, where $\displaystyle \mathscr{B_\eta}=\{\rho(x,t)\in C([0,\infty)\times[0,\eta]) : \rho_x\in C([0,\infty)\times[0,\eta]), \lVert\rho\rVert_{\infty}+\lVert\rho_x\rVert_{\infty}<\infty\}$ as
\begin{eqnarray}
\label{3241}
&&\displaystyle\mathcal{F}\rho(x,t):=\int_{0}^{\infty}G(x,t;\xi,0)\rho_0(\xi)d\xi \nonumber\\
&&\hspace{1cm}+\int_{0}^{t}\int_{0}^{\infty}G(x,t;\xi,\tau)\left[V_\tau\rho_\xi(\xi,\tau)+\int_{0}^{\infty}dy\rho(y,\tau)p(y,\xi)\right]d\xi d\tau.
\end{eqnarray}
Then $\displaystyle \mathscr{B_\eta}$ is a Banach space with the norm $\lVert \cdot \rVert_{\infty}+\lVert\frac{\partial}{\partial x}(\cdot)\rVert_{\infty}$ and we have 
\begin{eqnarray}
\displaystyle\lVert \mathcal{F}\rho_1-\mathcal{F}\rho_2 \rVert_{\infty}\leq 2\eta\left[\lVert V\rVert_{\infty}\lVert {\rho_{1}}_{x}- {\rho_{2}}_{x} \rVert_{\infty}+\lVert \rho_{1}- \rho_{2} \rVert_{\infty}\right],
\end{eqnarray}
and 
\begin{eqnarray}
\displaystyle \left\lVert \frac{\partial\mathcal{F}\rho_1}{\partial x}-\frac{\partial\mathcal{F}\rho_2}{\partial x} \right\rVert_{\infty}\leq c_1\sqrt{\eta}\left[\lVert V\rVert_{\infty}\lVert {\rho_{1}}_{x}- {\rho_{2}}_{x} \rVert_{\infty}+\lVert \rho_{1}- \rho_{2} \rVert_{\infty}\right].
\end{eqnarray}
Thus for all sufficiently small $\eta>0$, we get $\mathcal{F}$ is a contraction mapping so that there is a unique fixed point $\rho^\eta$. To extend $\eta$ to $T$, if we define $\mathcal {H}$ as 
\begin{eqnarray*}
&&\displaystyle\mathcal{H}\rho(x,t):=\int_{0}^{\infty}G(x,t;\xi,0)\rho_0(\xi)d\xi\\
&&\hspace{2cm}+\int_{0}^{\eta}\int_{0}^{\infty}G(x,t;\xi,\tau)\left[V_\tau{\rho^{\eta}}_\xi(\xi,\tau)+\int_{0}^{\infty}dy\rho^{\eta}(y,\tau)p(y,\xi)\right]d\xi d\tau\\
&&\hspace{2cm}+\int_{\eta}^{t}\int_{0}^{\infty}G(x,t;\xi,\tau)\left[V_\tau\rho_\xi(\xi,\tau)+\int_{0}^{\infty}dy\rho(y,\tau)p(y,\xi)\right]d\xi d\tau.
\end{eqnarray*}
Then we have
\begin{eqnarray}
\displaystyle\lVert \mathcal{H}\rho_1-\mathcal{H}\rho_2 \rVert_{\infty}\leq 2(t-\eta)\left[\lVert V\rVert_{\infty}\lVert {\rho_{1}}_{x}- {\rho_{2}}_{x} \rVert_{\infty}+\lVert \rho_{1}- \rho_{2} \rVert_{\infty}\right],
\end{eqnarray}
and 
\begin{eqnarray}
\displaystyle \left\lVert \frac{\partial\mathcal{H}\rho_1}{\partial x}-\frac{\partial\mathcal{H}\rho_2}{\partial x} \right\rVert_{\infty}\leq c_1\sqrt{t-\eta}\left[\lVert V\rVert_{\infty}\lVert {\rho_{1}}_{x}- {\rho_{2}}_{x} \rVert_{\infty}+\lVert \rho_{1}- \rho_{2} \rVert_{\infty}\right]
\end{eqnarray}
such that we have $\mathcal{H}$ is a contraction mapping from $\displaystyle \{\rho(x,t)\in C([0,\infty)\times[\eta, 2\eta]) : \rho_x\in C([0,\infty)\times[\eta,2\eta]), \lVert\rho\rVert_{\infty}+\lVert\rho_x\rVert_{\infty}<\infty\}$ to itself. Thus we can extend $\eta$ to $2\eta$, inductively also to $T$ and this completes the proof.
\end{proof}

We will prove the existence of a classical solution of the FBP \eqref{intro.17}-\eqref{intro.14}  in $[0,T]$ in the next sections:
\begin{thm}
\label{3424567}
 There is $T>0$ and a pair $(V,u)$ which satisfies the FBP \eqref{intro.17}-\eqref{intro.14} in $[0,T]$ with:   $\displaystyle V\in C([0,T])$ and $u\in C(\overline{D_{T}})\cap C^{2,1}(D_{T})$, where $\displaystyle D_{T}=\{(x,t): 0<x,\ 0<t\leq T\}$.
\end{thm}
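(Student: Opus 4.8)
The plan is to obtain the pair $(V,u)$ as a fixed point of the iteration sketched in Section~2. Fix $M>0$ and $T\le 1$ to be chosen later, and let $V_0$ be the number that \eqref{intro.14} forces at $t=0$,
\[
V_0=\frac{1}{u_0(0)}\Big[-\tfrac12 u_0'(0)+\int_0^\infty\int_0^\infty u_0(y)\,p(y,x)\,dy\,dx\Big],
\]
which is well defined since $u_0(0)=\rho_0'(0^+)>0$ by \eqref{intro.6}. Consider the closed bounded convex set
\[
\mathcal V_{M,T}=\Big\{V\in C([0,T]):\ V(0)=V_0,\ \|V\|_\infty\le M,\ \sup_{0\le s<t\le T}\frac{|V_t-V_s|}{\sqrt{t-s}}\le M\Big\}\subset C([0,T]).
\]
Given $V\in\mathcal V_{M,T}$, Proposition~\ref{prop.1} produces the unique $\rho=\rho[V]$ solving \eqref{intro.15}--\eqref{intro.16} with $\|\rho\|_\infty+\|\rho_x\|_\infty<\infty$, and we set $g(t):=2\int_0^\infty\int_0^\infty\rho(y,t)\,p(y,x)\,dy\,dx$, to serve as boundary datum for $u$. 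From the representation \eqref{3241}, together with $p\in C^1$ of compact support and $\rho(\cdot,0)=\rho_0$, one reads off that $g\in C([0,T])$, that $g(0)=u_0(0)$ by the compatibility \eqref{intro.6}, and that $g$ is H\"older-$1/2$ in $t$ with $[g]_{1/2}\le C_0+C(M)\sqrt{T}$, $C_0$ depending only on the data; in particular $g(t)\ge\tfrac12 u_0(0)>0$ for $T$ small.

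Next, with this boundary datum solve the linear problem \eqref{intro.17}--\eqref{intro.19} for $u$. Writing $u$ through the quarter-plane Green function $G$, a boundary potential carrying the datum $g$, and Duhamel terms for the drift $V_t u_x$ and the nonlocal source $\int_0^\infty p(y,x)u(y,t)\,dy$, the same short-time contraction as in Proposition~\ref{prop.1} — now run in the norm $\|\cdot\|_\infty+\|\partial_x(\cdot)\|_\infty$ and extended step by step to $[0,T]$ — produces a unique $u=u[V]$ with $u,u_x$ bounded, $u\in C(\overline{D_T})$ (continuity up to the corner $(0,0)$ using $g(0)=u_0(0)$), and $u\in C^{2,1}(D_T)$ by parabolic interior regularity. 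The crucial point is the boundary flux $u_x(0,t)$: here one invokes Lemma~\ref{lem.3}, which, because $u_0\in C^2$ (as $\rho_0\in C^3$), $g\in C^{1/2}$ and $g(0)=u_0(0)$, yields that $t\mapsto u_x(0,t)$ is H\"older-$1/2$ on $(0,T]$ with $[u_x(0,\cdot)]_{1/2}\le C_0'+C(M)\sqrt{T}$, $C_0'$ depending only on the data. This is exactly the estimate for which the initial/boundary regularity is too weak to quote \cite{LSU} directly and for which the sharpened form of (4.24) of \cite{fasanoprimicerio2} is needed.

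Now define $\Phi:\mathcal V_{M,T}\to C([0,T])$ by
\[
(\Phi V)_t:=\frac{1}{u(0,t)}\Big[-\tfrac12 u_x(0,t)+\int_0^\infty\int_0^\infty u(y,t)\,p(y,x)\,dy\,dx\Big],\qquad u=u[V],
\]
which is legitimate since $u(0,t)=g(t)\ge\tfrac12 u_0(0)>0$, and which satisfies $(\Phi V)(0)=V_0$ by construction. Combining the H\"older-$1/2$ bounds on $u_x(0,\cdot)$ and on $g=u(0,\cdot)$ with the smoothing in $\int\int u\,p$ gives $\|\Phi V\|_\infty+[\Phi V]_{1/2}\le C_1+C(M)\sqrt{T}$ with $C_1$ depending only on the data; choosing first $M=2(C_1+|V_0|)$ and then $T$ so small that $C(M)\sqrt T\le C_1$ makes $\Phi$ map $\mathcal V_{M,T}$ into itself. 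Finally, each passage $V\mapsto\rho[V]$, $\rho\mapsto g$, $(V,g)\mapsto u[V]$, $u\mapsto\Phi V$ is Lipschitz in the relevant norm with constant carrying a positive power of $T$ — precisely as in the estimates of Proposition~\ref{prop.1} and Lemma~\ref{lem.3} — so for $T$ small $\Phi$ is a contraction on $\mathcal V_{M,T}$ (alternatively, compactness of the H\"older embedding plus continuity of $\Phi$ gives a Schauder fixed point). Its fixed point $V$, with $u=u[V]$, solves \eqref{intro.17}--\eqref{intro.19} by construction and \eqref{intro.14} because $\Phi V=V$; this is the pair claimed.

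The main obstacle is the boundary-flux step: showing that $u_x(0,t)$ is H\"older-$1/2$ in time with a seminorm controlled only by the data up to a $\sqrt T$-small correction, even though the boundary datum $g=2\int\int\rho\,p$ inherits only the limited regularity transmitted through $\rho[V]$, which itself solves a heat equation driven by the merely H\"older-$1/2$ drift $V$. Closing the loop $V\in C^{1/2}\Rightarrow\rho\Rightarrow g\in C^{1/2}\Rightarrow u_x(0,\cdot)\in C^{1/2}\Rightarrow\Phi V\in C^{1/2}$ self-consistently — which is what Lemma~\ref{lem.3} is designed to do — is the heart of the argument; the rest is short-time contraction/continuity bookkeeping modeled directly on Proposition~\ref{prop.1}.
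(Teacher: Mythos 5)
Your proposal is correct and follows essentially the same route as the paper: fix the frame at the edge, iterate on the velocity in a H\"older-$1/2$ class with the prescribed initial value $V_0$, solve \eqref{intro.15}--\eqref{intro.16} and then \eqref{intro.17}--\eqref{intro.19} by Green's-function contraction, and close via the a priori bounds on $u_{xx}$ and the H\"older-$1/2$ estimate on the boundary flux $u_x(0,\cdot)$ (in the paper this is Lemma~\ref{lem.4}, built on Lemma~\ref{lem.3}, rather than Lemma~\ref{lem.3} itself) before applying a fixed-point argument on the velocity map $Q$. The only difference is the final step: the paper does not verify a contraction but uses the Schauder theorem (continuity of $Q$ in sup norm plus convexity and compactness of $\Sigma(A,T)$), which is precisely the alternative you mention, while your contraction claim would require checking that every Lipschitz constant in the chain indeed carries a positive power of $T$.
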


\medskip

We did not find a proof of Theorem \ref{3424567} in the existing literature, see for instance \cite{BH}, \cite{CV} and references therein.  Our proof exploits the one dimensionality of the problem and uses extensively the Cannon estimates, \cite{Cannon}, following the strategy proposed by Fasano in \cite{Fasano}. Then we will prove Theorem \ref{thmintro.1} as a corollary  of Theorem \ref{3424567}.

\vskip 2cm

\section{Proof of  Theorem \ref{3424567}}
\label{sec.4}
Theorem \ref{3424567} is proved at the end of the section. The idea is to reduce the analysis of the FBP \eqref{intro.17}-\eqref{intro.14} to a fixed point problem:

\begin{itemize}

\item Take a curve $V_t, t\in[0,T]$, and find $u$ such that $(V,u)$ solves \eqref{intro.17}-\eqref{intro.19}.

\item Construct a new curve $\displaystyle Q[V](t)=\frac{-\frac{1}{2}u_x(0,t)+\int_{0}^{\infty}\int_{0}^{\infty}u(y,t)p(y,x)dydx}{u(0,t)}$ for $0\leq t\leq T$

\item Find $V$ so that $Q[V]=V$ and prove that the corresponding pair $(V,u)$
solves the FBP \eqref{intro.17}-\eqref{intro.14}.

\end{itemize}

 The first task is to prove existence and smoothness of $u$, that we do in this section using   the   lemmas below, see \cite{Cannon}.
%Let $A>0$ and
%   $$
%\Sigma(A,T):=\Big\{V\in C([0,T]) : V_0=\frac{-\frac{1}{2}\rho_0^{\prime\prime}(0)+\int_{0}^{\infty}\int_{0}^{\infty}dy\rho_0^{\prime}(y)p(y,x)dx }{\rho_0^{\prime}(0)}, |V|_{\frac{1}{2}}\leq A \Big\},
%   $$
%where $\displaystyle|\cdot|_{\frac{1}{2}}$ is the H\"{o}lder seminorm with exponent $\dfrac{1}{2}$.
   
%
%
%
%In the proof of  Proposition \ref{96} we use the following lemmas, see \cite{Cannon}. %Let $s_1$ and $s_2$ be
%%two continuous curves such that $s_1(t)<s_2(t), t\in[0,T]$, and let $\displaystyle E_T:=\{(x,t): s_{1}(t)<x<s_{2}(t),\ 0<t\leq T\}$, $\displaystyle B_T:=\{(s_{i}(t),t): 0\leq t\leq T,\ i\in\{1,2\}\}\cup\{(x,0): s_{1}(0)< x< s_{2}(0)\}$.
%
%\medskip
%%\begin{lem}[\emph{The Weak Maximum, Minimum Principle}]
%%Let $u$ solve  $\displaystyle u_t=\frac{1}{2}u_{xx}$ in $E_T$, and suppose that  $u$ is continuous in $E_T\cup B_T$, then
%%\begin{eqnarray}
%%\displaystyle \max_{E_T\cup B_T}u= \max_{B_T}u,\ \  \min_{E_T\cup B_T}u= \min_{B_T}u.
%%\end{eqnarray}
%
%
%%\end{lem}

\medskip

\begin{prop}
\label{prop.2}
Let $\displaystyle V\in C([0,T])$ where $T>0$. There is  a unique solution $\displaystyle u\in C(\overline{D_T})$ where $\displaystyle D_T=\{(x,t): 0<x,\ 0<t\leq T\}$ with $\displaystyle  u_x\in C(\overline{D_T})$ and $\displaystyle\lVert u\rVert_{\infty}+\lVert u_x\rVert_{\infty}<\infty$ such that $(V,u)$ satisfies \eqref{intro.17}-\eqref{intro.19}.
\end{prop}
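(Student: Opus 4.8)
The plan is to treat \eqref{intro.17}--\eqref{intro.19} as a linear parabolic initial--boundary value problem for $u$ on the quarter plane $D_{T}$ --- with transport term $V_t u_x$, nonlocal source $\int_0^\infty p(y,x)u(y,t)\,dy$, initial datum $u_0=\rho_0'$, and the Dirichlet datum $u(0,t)=g(t)$, where $g(t):=2\int_0^\infty\int_0^\infty\rho(y,t)p(y,x)\,dy\,dx$ and $\rho$ is the function produced by Proposition~\ref{prop.1} --- and to solve it by the same contraction argument used for Proposition~\ref{prop.1}, the only new feature being that $g\not\equiv 0$, so the Duhamel representation must be augmented by the Dirichlet boundary potential associated with $G$. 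First I would record the elementary properties of $g$: it is bounded and continuous on $[0,T]$ because $p(0,\cdot)\in C^1$ has compact support and $\rho\in C(\overline{D_T})$ is bounded; it satisfies $g(0)=2\int_0^\infty\int_0^\infty\rho_0(y)p(y,x)\,dy\,dx=u_0(0)$ by the assumption \eqref{intro.6}; and it is H\"older-$\tfrac12$ in $t$, since plugging the Green-function representation of Proposition~\ref{prop.1} into itself shows that $\rho$, and hence $g$, has that regularity.

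Next I would write the candidate solution, via Green's identity on the quarter plane (using that $G$ vanishes at $\xi=0$), as $u=\Phi+\mathcal N u$, where
\[
\Phi(x,t):=\int_0^\infty G(x,t;\xi,0)u_0(\xi)\,d\xi+\tfrac12\int_0^t G_\xi(x,t;0,\tau)\,g(\tau)\,d\tau
\]
collects the two data terms (the last integral being the Dirichlet boundary potential) and
\[
\mathcal N u(x,t):=\int_0^t\!\!\int_0^\infty G(x,t;\xi,\tau)\Big[V_\tau u_\xi(\xi,\tau)+\int_0^\infty p(y,\xi)u(y,\tau)\,dy\Big]d\xi\,d\tau .
\]
Since $\mathcal T u:=\Phi+\mathcal N u$ differs from $\mathcal N$ only by the fixed term $\Phi$, the contraction estimates are exactly those of Proposition~\ref{prop.1}: from $\int_0^\infty G(x,t;\xi,\tau)\,d\xi\le 1$, $\int_0^\infty p(y,\xi)\,dy\le 1$ and $\int_0^\infty|G_x(x,t;\xi,\tau)|\,d\xi\le c_1(t-\tau)^{-1/2}$ one gets, on $[0,\eta]$,
\[
\|\mathcal T u_1-\mathcal T u_2\|_\infty\le 2\eta\big(\|V\|_\infty\|u_{1x}-u_{2x}\|_\infty+\|u_1-u_2\|_\infty\big),\qquad \|\partial_x(\mathcal T u_1-\mathcal T u_2)\|_\infty\le c_1\sqrt\eta\,\big(\|V\|_\infty\|u_{1x}-u_{2x}\|_\infty+\|u_1-u_2\|_\infty\big),
\]
so $\mathcal T$ is a contraction on $\mathscr{B}_\eta$ for $\eta$ small, with a unique fixed point $u^\eta$; I would then extend from $[0,\eta]$ to $[0,2\eta]$ and inductively to $[0,T]$ by the gluing device used for the map $\mathcal H$ in Proposition~\ref{prop.1}.

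The step I expect to be the real work is showing that $\Phi$ and $\Phi_x$ belong to $C(\overline{D_T})$ and are bounded, in particular up to the parabolic corner $(0,0)$ (beyond Proposition~\ref{prop.1}, this is exactly what is needed for $\mathcal T$ to map $\mathscr{B}_\eta$ into itself). The initial-datum potential is classical, since $u_0\in C^2$ with compact support ($\rho_0\in C^3$), and the boundary potential is bounded because for $g\equiv 1$ it equals $\pi^{-1/2}\Gamma(\tfrac12,x^2/(2t))\le 1$, tends to $g(t)$ as $x\downarrow 0$ and to $0$ as $t\downarrow 0$; continuity of $\Phi$ across the corner then \emph{requires} $u_0(0)=g(0)$, which is precisely \eqref{intro.6}. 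For $\Phi_x$ that same compatibility is what saves the day: the two contributions to $\partial_x\Phi$ that are individually unbounded as $(x,t)\to(0,0)$ --- one proportional to $u_0(0)\,t^{-1/2}e^{-x^2/(2t)}$ from the initial-datum potential, one proportional to $-g(0)\,t^{-1/2}e^{-x^2/(2t)}$ from the boundary potential --- cancel, and what remains is controlled by the H\"older-$\tfrac12$ modulus of $g$ together with the $C^1$ regularity of $u_0$. This is exactly the role of the ``strange condition'' \eqref{intro.6}, and the accompanying subtlety is to actually produce the time-H\"older bound on $g$ out of Proposition~\ref{prop.1}. Uniqueness in the stated class is then easy: any such solution satisfies the integral identity (Green's identity again), so the difference $w$ of two solutions obeys $w=\mathcal N w$ and hence $\|w\|_{\mathscr{B}_\eta}\le c_1\sqrt\eta\,\|w\|_{\mathscr{B}_\eta}$, forcing $w\equiv 0$ for $\eta$ small, and one iterates. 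The transport and nonlocal terms, by contrast, require nothing beyond what is already in Proposition~\ref{prop.1}.
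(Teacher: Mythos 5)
Your scheme is the paper's: the paper defines the very same fixed-point map, $\mathcal{F}u(x,t)=-\int_0^t K_x(x,t;0,\tau)g(\tau)\,d\tau+\int_0^\infty G(x,t;\xi,0)\varphi(\xi)\,d\xi+\{\text{Duhamel term}\}$ (your boundary potential $\tfrac12\int_0^t G_\xi(x,t;0,\tau)g(\tau)\,d\tau$ is the same object, since $G_\xi(x,t;0,\tau)=-2K_x(x,t;0,\tau)$), proves the same two contraction estimates on $\mathscr{B}_\eta$, extends from $[0,\eta]$ to $[0,T]$ by the same gluing device as in Proposition \ref{prop.1}, and identifies the fixed point with the solution by citing Cannon's Theorem 20.3.1. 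So the decomposition, the contraction, the extension and the uniqueness argument all coincide with the paper's.

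The step you yourself single out as ``the real work'' is, however, not closed as written. After the cancellation of the two singular corner contributions (which you correctly attribute to \eqref{intro.6}, i.e.\ $\varphi(0)=g(0)$), you claim the remainder of $\Phi_x$ is controlled by the H\"older-$\tfrac12$ modulus of $g$. It is not: writing $\partial_x\bigl(-\int_0^t K_x(x,t;0,\tau)g(\tau)\,d\tau\bigr)=-\int_0^t K_{xx}(x,t;0,\tau)\bigl(g(\tau)-g(t)\bigr)\,d\tau-2g(t)K(x,t;0,0)$, the first integral with a H\"older-$\tfrac12$ density is only bounded by $\int_0^t(t-\tau)^{-3/2}(t-\tau)^{1/2}\,d\tau=\int_0^t(t-\tau)^{-1}\,d\tau$, which diverges; exponent $\tfrac12$ is exactly the critical case where the normal derivative of the heat double-layer potential fails to stay bounded up to $x=0$ (one needs exponent strictly larger than $\tfrac12$, or a derivative on $g$), and this failure occurs along the whole lateral boundary, not only at the corner. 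The paper's route supplies precisely the missing regularity: since $g(t)=2\int_0^\infty\int_0^\infty\rho(y,t)p(y,x)\,dy\,dx$ with $\rho$ from Proposition \ref{prop.1}, $g$ is in fact $C^1$ with $|g'(t)|\le c\,|\rho_x(0,t)|+(c|V_t|+c)\lVert\rho(\cdot,t)\rVert_{L^1}$ (this is \eqref{1313}); then integrating by parts in $\tau$ (using $K_{xx}=-2\partial_\tau K$) turns the boundary-potential derivative into $-2\int_0^t K(x,t;0,\tau)g'(\tau)\,d\tau-2K(x,t;0,0)g(0)$, whose corner term cancels $+2K(x,t;0,0)\varphi(0)$ coming from the initial-datum potential exactly by \eqref{intro.6} --- this is the computation behind the paper's formula \eqref{6789} for $u_x$. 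So your identification of the role of the compatibility condition is right, but you must upgrade ``$g$ H\"older-$\tfrac12$'' to ``$g\in C^1$ with bounded derivative'' (which is available, and is what the paper uses) for $\mathcal{T}$ to map $\mathscr{B}_\eta$ into itself and for the fixed point to have $u_x\in C(\overline{D_T})$.
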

\begin{proof}
Let $\rho$ as in Proposition \ref{prop.1} and let us define a mapping $\mathcal{F}:\mathscr{B_\eta}\rightarrow \mathscr{B_\eta}$ where $\displaystyle \mathscr{B_\eta}=\{u(x,t)\in C([0,\infty)\times[0,\eta]) : u_x\in C([0,\infty)\times[0,\eta]), \lVert u\rVert_{\infty}+\lVert u_x\rVert_{\infty}<\infty\}$ as
\begin{eqnarray*}
\label{12345678}
&&\mathcal{F}u(x,t):=-\int_{0}^{t}K_x(x,t;0,\tau)g(\tau)d\tau+\int_{0}^{\infty}G(x,t;\xi,0)\varphi(\xi)d\xi \nonumber \\
&&\hskip 2cm+\int_{0}^{t}\int_{0}^{\infty}G(x,t;\xi,\tau)\left[V_\tau u_{\xi}(\xi,\tau)+\int_{0}^{\infty}dy u(y,\tau)p(y,\xi)\right]d\xi d\tau,
\end{eqnarray*}
where
\begin{eqnarray*}
\label{}
u(0,t)=2\int_{0}^{\infty}\int_{0}^{\infty}dy\rho(y,t)p(y,x)dx=:g(t),\ \varphi(\xi):=\rho_0^{\prime}(\xi).
\end{eqnarray*}
Then we can show that $\mathcal{F}$ is a contraction mapping for all sufficiently small $\eta>0$ and extend $\eta$ to $T$ as same as the proof of Proposition \ref{prop.1} so that  $\mathcal{F}$ has a unique fixed point $\displaystyle u\in C(\overline{D_T})$. Then $u$ is the unique solution which satisfies \eqref{intro.17}-\eqref{intro.19} (see Theorem 20.3.1 of \cite{Cannon}). 
\end{proof}
\vskip 0.3cm
Thus we can write $u$ as
\begin{eqnarray}
\label{123456789}
&&u(x,t)=-\int_{0}^{t}K_x(x,t;0,\tau)g(\tau)d\tau+\int_{0}^{\infty}G(x,t;\xi,0)\varphi(\xi)d\xi \nonumber \\
&&\hskip 1cm+\int_{0}^{t}\int_{0}^{\infty}G(x,t;\xi,\tau)\left[V_\tau u_{\xi}(\xi,\tau)+\int_{0}^{\infty}dy u(y,\tau)p(y,\xi)\right]d\xi d\tau,
\end{eqnarray}
where
\begin{eqnarray*}
\label{}
u(0,t)=2\int_{0}^{\infty}\int_{0}^{\infty}dy\rho(y,t)p(y,x)dx=:g(t),\ \varphi(\xi):=\rho_0^{\prime}(\xi).
\end{eqnarray*}
In addition, by differentiating \eqref{123456789} and integration by parts, we have
\begin{eqnarray}
\label{6789}
&& u_x(x,t)=-2\int_{0}^{t}K(x,t;0,\tau)g^{\prime}(\tau)d\tau+\int_{0}^{\infty}[K(x,t;\xi,0)+K(x,t;-\xi,0)    ]\varphi^{\prime}(\xi)d\xi\nonumber \\
&&\hskip 1.5cm+\int_{0}^{t}\int_{0}^{\infty}G_x(x,t;\xi,\tau)\left[V_\tau u_{\xi}(\xi,\tau)+\int_{0}^{\infty}dy u(y,\tau)p(y,\xi)\right]d\xi d\tau.
\end{eqnarray}

\medskip
We introduce
the following lemma from \cite{Cannon} which plays an essential role in our analysis.

\begin{lem}
\label{lem.1}
Let $\phi(t)$ satisfy
\begin{eqnarray}
\displaystyle 0\leq\phi(t)\leq\psi(t)+C_1\int_{0}^{t}\frac{\phi(\tau)}{\sqrt{t-\tau}}d\tau,\ 0\leq t\leq T,
\end{eqnarray}
where $C_1\geq 0$ and $\psi(t)$ is nonnegative and nondecreasing. Then
\begin{eqnarray}
0\leq \phi(t)\leq [1+2C_1\sqrt{t}]\psi(t)\exp\{\pi C_1^{2}t\}\leq C_2\psi(t)
\end{eqnarray}
with 
\begin{eqnarray}
C_2=[1+2C_1\sqrt{T}]\exp\{\pi C_1^{2}T\}.
\end{eqnarray}
\end{lem}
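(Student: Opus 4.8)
The plan is to iterate the integral inequality and sum the resulting Neumann series. I would set $(Kf)(t):=\int_0^t (t-\tau)^{-1/2}f(\tau)\,d\tau$, so that the hypothesis reads $\phi\le\psi+C_1 K\phi$ on $[0,T]$; since $K$ preserves nonnegativity, substituting this bound into itself $n$ times gives
\[
\phi(t)\;\le\;\sum_{k=0}^{n-1}C_1^{\,k}(K^{k}\psi)(t)\;+\;C_1^{\,n}(K^{n}\phi)(t),\qquad 0\le t\le T.
\]
The first task is to compute the iterated kernel. Writing $(t-\tau)^{-1/2}=\sqrt{\pi}\,(t-\tau)^{1/2-1}/\Gamma(1/2)$ and using the Beta-integral identity $\int_\tau^t (t-s)^{\alpha-1}(s-\tau)^{\beta-1}\,ds=B(\alpha,\beta)(t-\tau)^{\alpha+\beta-1}$ (equivalently the convolution semigroup property $k_\alpha*k_\beta=k_{\alpha+\beta}$ for $k_\alpha(t)=t^{\alpha-1}/\Gamma(\alpha)$), an easy induction on $k$ yields
\[
(K^{k}f)(t)\;=\;\pi^{k/2}\int_0^t\frac{(t-\tau)^{k/2-1}}{\Gamma(k/2)}\,f(\tau)\,d\tau.
\]

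Next I would exploit the monotonicity of $\psi$: since $\psi(\tau)\le\psi(t)$ for $\tau\le t$,
\[
(K^{k}\psi)(t)\;\le\;\psi(t)\,\pi^{k/2}\int_0^t\frac{(t-\tau)^{k/2-1}}{\Gamma(k/2)}\,d\tau\;=\;\psi(t)\,\frac{(\pi t)^{k/2}}{\Gamma(k/2+1)}.
\]
The same estimate applied to $\phi$ itself — which is bounded on $[0,T]$ in our applications, and must at least be locally integrable for the stated inequality to make sense — bounds the remainder term $C_1^{\,n}(K^{n}\phi)(t)$ by a constant times $(C_1\sqrt{\pi t}\,)^{n}/\Gamma(n/2+1)$, the general term of a convergent series, so it vanishes as $n\to\infty$. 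Letting $n\to\infty$ then gives $\phi(t)\le\psi(t)\sum_{k\ge 0}(C_1\sqrt{\pi t}\,)^{k}/\Gamma(k/2+1)$.

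The final step is to evaluate that series by separating even and odd indices. For $k=2m$ the terms are $(\pi C_1^{2}t)^{m}/m!$, which sum to $\exp\{\pi C_1^{2}t\}$; for $k=2m+1$ the terms are $C_1\sqrt{\pi t}\,(\pi C_1^{2}t)^{m}/\Gamma(m+3/2)$, and the elementary inequality $\Gamma(m+3/2)=\Gamma(3/2)\prod_{j=1}^{m}(j+\tfrac12)\ge\Gamma(3/2)\,m!=\tfrac{\sqrt{\pi}}{2}\,m!$ shows the odd part is at most $2C_1\sqrt{t}\,\exp\{\pi C_1^{2}t\}$. Adding the two contributions gives $\phi(t)\le[1+2C_1\sqrt{t}\,]\psi(t)\exp\{\pi C_1^{2}t\}$, and since $t\mapsto[1+2C_1\sqrt{t}\,]\exp\{\pi C_1^{2}t\}$ is nondecreasing on $[0,T]$ this is bounded by $C_2\psi(t)$ with $C_2=[1+2C_1\sqrt{T}\,]\exp\{\pi C_1^{2}T\}$. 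The only delicate point is the Gamma-function bookkeeping in the odd-index estimate; the kernel convolution and the vanishing of the remainder are routine. One can equally well avoid the fractional-calculus language and prove $(K^{k}\psi)(t)\le\psi(t)(\pi t)^{k/2}/\Gamma(k/2+1)$ directly by induction on $k$, treating even and odd $k$ separately, which amounts to the same computation.
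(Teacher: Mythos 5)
Your argument is correct, and it reaches exactly the stated constants. Note, however, that the paper does not prove this lemma at all: it simply cites Lemma 17.7.1 of \cite{Cannon}, whose proof is a shorter two-step argument — substitute the inequality into itself \emph{once}, use the Beta-integral identity $\int_\tau^t (t-s)^{-1/2}(s-\tau)^{-1/2}\,ds=\pi$ to get $\phi(t)\le \psi(t)\bigl[1+2C_1\sqrt{t}\bigr]+\pi C_1^2\int_0^t\phi(s)\,ds$, and then apply the classical Gronwall lemma (with the nondecreasing prefactor $\psi(t)[1+2C_1\sqrt t]$) to obtain $\phi(t)\le[1+2C_1\sqrt t]\psi(t)e^{\pi C_1^2 t}$. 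You instead iterate indefinitely and sum the full Neumann series, identifying $K^k$ with (a multiple of) the Riemann--Liouville fractional integral of order $k/2$ and controlling the odd-index terms via $\Gamma(m+3/2)\ge\tfrac{\sqrt\pi}{2}\,m!$; remarkably this bookkeeping reproduces the same constant $[1+2C_1\sqrt t]e^{\pi C_1^2 t}$, and your route has the merit of being self-contained and of making transparent why the kernel singularity is harmless (the series $\sum (C_1\sqrt{\pi t})^k/\Gamma(k/2+1)$ converges like a Mittag-Leffler function). Two small points to tidy up: the vanishing of the remainder $C_1^n(K^n\phi)(t)$ needs an explicit a priori hypothesis on $\phi$ (boundedness or local integrability, which is implicit in the statement and satisfied in all the paper's applications, as you note — Cannon's one-iteration proof sidesteps this entirely since it never sends $n\to\infty$); and you should state once that iterating the inequality is legitimate because $K$ is monotone on nonnegative functions, which you use tacitly.
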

\begin{proof}
See Lemma 17.7.1 of \cite{Cannon}.
\end{proof}
 
Let $A>0$ and
   $$
\Sigma(A,T):=\Big\{V\in C([0,T]) : V_0=\frac{-\frac{1}{2}\rho_0^{\prime\prime}(0)+\int_{0}^{\infty}\int_{0}^{\infty}dy\rho_0^{\prime}(y)p(y,x)dx }{\rho_0^{\prime}(0)}, |V|_{\frac{1}{2}}\leq A \Big\},
   $$
where $\displaystyle|\cdot|_{\frac{1}{2}}$ is the H\"{o}lder seminorm with exponent $\dfrac{1}{2}$.

Let us denote by $\mathcal{S}$ a collection of continuous functions $C:[0,\infty)\times[0,\infty)\rightarrow\mathbb[0,\infty)$ such that for each $x\in[0,\infty)$, $C(x,\cdot)$ is increasing with respect to the second variable and $C(\cdot,0)>0$ is independent of the first variable.  \\

\begin{lem}
\label{lem.2}
Let $V\in\Sigma(A,T)$. For $(V,\rho)$ as in Proposition \ref{prop.1} and $(V,u)$ as in Proposition \ref{prop.2}, then $\displaystyle \lVert \rho \rVert_{\infty}\leq C_1(A,T)$, $\displaystyle\lVert \rho_x\rVert_{\infty}\leq C_2(A,T)$, $\displaystyle\sup_{0\leq t\leq T}\lVert \rho(\cdot,t)\rVert_{L^{1}}\leq C_3(A,T)$, $\displaystyle \lVert u \rVert_{\infty}\leq C_4(A,T)$, $\displaystyle\lVert u_x\rVert_{\infty}\leq C_5(A,T)$, $\displaystyle\sup_{0\leq t\leq T}\lVert u(\cdot,t)\rVert_{L^{1}}\leq C_6(A,T)$,  where $C_i\in\mathcal{S}$ for all $1\leq i \leq 6$.
\end{lem}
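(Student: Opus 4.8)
All six bounds come from feeding an integral representation of $\rho$, resp. $u$, into the Gronwall lemma (Lemma \ref{lem.1}) and then reading off how the resulting constants depend on $A$ and $T$ through $\|V\|_\infty\le|V_0|+A\sqrt T=:b(A,T)$. Start from the fixed-point identity \eqref{3241} for the $\rho$ of Proposition \ref{prop.1}. For $\|\rho(\cdot,t)\|_\infty$ use $0\le\int_0^\infty G(x,t;\xi,\tau)\,d\xi\le1$ and $\int_0^\infty p(y,\xi)\,d\xi\le1$; differentiating \eqref{3241} in $x$ and integrating by parts in $\xi$ in $\int_0^\infty G_x(x,t;\xi,0)\rho_0(\xi)\,d\xi$ — the boundary terms vanish since $\rho_0(0)=0$ and $\rho_0$ has compact support, leaving the kernel $K(x,t;\xi,0)+K(x,t;-\xi,0)$ whose $\xi$-integral is $1$ — and using $\int_0^\infty|G_x(x,t;\xi,\tau)|\,d\xi\le c(t-\tau)^{-1/2}$, one gets a coupled pair of inequalities; adding them and absorbing the non-singular time integral via $\int_0^t(\cdot)\,d\tau\le\sqrt T\int_0^t(\cdot)(t-\tau)^{-1/2}d\tau$ puts $\phi(t):=\|\rho(\cdot,t)\|_\infty+\|\rho_x(\cdot,t)\|_\infty$ into the form required by Lemma \ref{lem.1} with $\psi\equiv\|\rho_0\|_\infty+\|\rho_0'\|_\infty$ and $C_1=(c+\sqrt T)(b(A,T)+1)$. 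This gives $\|\rho\|_\infty\le C_1(A,T)$, $\|\rho_x\|_\infty\le C_2(A,T)$, with $C_1,C_2\in\mathcal S$: continuous in $(A,T)$, nondecreasing in $T$ (as $\psi$ is constant and $C_1$ increases in $T$), and at $T=0$ equal to $\psi$, independent of $A$.

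For $\sup_{t\le T}\|\rho(\cdot,t)\|_{L^1}$, integrate \eqref{3241} in $x$ after integrating the drift term by parts in $\xi$ (no boundary term since $G(x,t;0,\tau)=0$); with $\int_0^\infty G(x,t;\xi,\tau)\,dx\le1$ and $\int_0^\infty|G_\xi(x,t;\xi,\tau)|\,dx\le c(t-\tau)^{-1/2}$ the estimate closes on $\|\rho(\cdot,t)\|_{L^1}$ alone, and Lemma \ref{lem.1} gives $C_3(A,T)\in\mathcal S$. Since $u(0,t)=g(t)=2\int_0^\infty\rho(y,t)q(y)\,dy$ with $q(y)=\int_0^\infty p(y,x)\,dx\in C^1$, $0\le q\le1$, we get $\|g\|_\infty\le2C_3$ and $g(0)=\rho_0'(0)$; differentiating $g$, substituting \eqref{intro.15} for $\rho_t$ and integrating by parts (legitimate by interior parabolic regularity of $\rho$ together with $\rho_x\in C(\overline{D_T})$, carried out on $[\varepsilon,\infty)$ with $\varepsilon\to0$) bounds $\|g'\|_\infty$ by a combination of $C_2,C_3,b(A,T)$, again in $\mathcal S$. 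Now rerun the scheme on the representations \eqref{123456789}, \eqref{6789} of $u$ and $u_x$: the new boundary potentials $-\int_0^t K_x(x,t;0,\tau)g(\tau)\,d\tau$ and $-2\int_0^t K(x,t;0,\tau)g'(\tau)\,d\tau$ are controlled by $\sup_x\int_0^t|K_x(x,t;0,\tau)|\,d\tau\le1$, $\sup_x\int_0^t K(x,t;0,\tau)\,d\tau\le\sqrt{2T/\pi}$ and $\int_0^\infty|K_x(x,t;0,\tau)|\,dx=(2\pi(t-\tau))^{-1/2}$, while the source, drift and $\varphi=\rho_0'$ terms are treated verbatim as for $\rho$ (the zeroth-order corner term that the integration by parts of the $\varphi$-term would produce cancels, in the interior, against the $g(0)=\rho_0'(0)$ part of the boundary potential, which is why \eqref{6789} carries none). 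Lemma \ref{lem.1} then yields $\|u\|_\infty\le C_4$, $\|u_x\|_\infty\le C_5$, $\sup_{t\le T}\|u(\cdot,t)\|_{L^1}\le C_6$, all in $\mathcal S$, since their driving data $\|g\|_\infty,\|g'\|_\infty,\|\rho_0'\|_\infty,\|\rho_0''\|_\infty,\|\rho_0'\|_{L^1}$ are, and at $T=0$ depend only on $\rho_0$ and $p$.

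The one step that is not just bookkeeping and Cannon-type kernel estimates is the a priori integrability of $\rho(\cdot,t)$ and $u(\cdot,t)$ — and enough decay in $x$ to justify the Fubini exchanges and integrations by parts above — since Propositions \ref{prop.1}–\ref{prop.2} provide only $L^\infty$ control of $\rho,\rho_x,u,u_x$. I would obtain it through a preliminary exponentially weighted estimate: $\rho_0$, $\rho_0'$ and $p(0,\cdot)$ having compact support gives $\rho_0(x),\rho_0'(x)\le C_0e^{-\lambda x}$, and the Gaussian identity $\int_{-\infty}^\infty K(x,t;\xi,\tau)e^{-\lambda\xi}\,d\xi\le Ce^{-\lambda x}e^{\lambda^2(t-\tau)/2}$ shows that the Green operator, the drift and the nonlocal term all preserve the class $\{\,|f(x)|\le c\,e^{-\lambda x}\,\}$; rerunning the contraction/Gronwall arguments in the $e^{-\lambda x}$-weighted sup norm yields $|\rho|,|\rho_x|,|u|,|u_x|\le C(A,T)e^{-\lambda x}$ on $\overline{D_T}$, which simultaneously makes the preceding computations rigorous and places $\rho(\cdot,t),u(\cdot,t)$ in $L^1$ with a priori finite norm.
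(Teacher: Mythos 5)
Your proposal is correct and follows essentially the same route as the paper: the integral representations \eqref{3241}, \eqref{123456789} and \eqref{6789}, the Cannon kernel bounds, Lemma \ref{lem.1} combined with Gronwall, the control $\lVert V\rVert_{\infty}\le |V_0|+A\sqrt{T}$, the bound $\lVert g\rVert_{\infty}\le 2\sup_{t}\lVert\rho(\cdot,t)\rVert_{L^{1}}$, and the estimate of $g'$ via the equation and integration by parts exactly as in \eqref{1313}. The only deviations are minor: you couple $\rho$ and $\rho_x$ into a single Gronwall quantity where the paper treats them sequentially (after first integrating the drift term by parts so the $\rho$ estimate closes on itself), and you add an exponentially weighted estimate to justify the a priori finiteness of the $L^{1}$ norms and the Fubini exchanges, a point the paper leaves implicit.
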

\begin{proof}
We can write $\rho$ as
\begin{eqnarray}
\label{145}
&&\displaystyle\rho(x,t)=\int_{0}^{\infty}G(x,t;\xi,0)\rho_0(\xi)d\xi \nonumber \\
&&\hspace{2cm}+\int_{0}^{t}\int_{0}^{\infty}G(x,t;\xi,\tau)\left[V_\tau\rho_\xi(\xi,\tau)+\int_{0}^{\infty}dy\rho(y,\tau)p(y,\xi)d\xi d\tau\right] \nonumber\\
&&\hspace{1.1cm}=\int_{0}^{\infty}G(x,t;\xi,0)\rho_0(\xi)d\xi-\int_{0}^{t}\int_{0}^{\infty}G_{\xi}(x,t;\xi,\tau)V_\tau\rho(\xi,\tau)d\xi d\tau\nonumber \\
&&\hspace{2cm}+\int_{0}^{t}\int_{0}^{\infty}G(x,t;\xi,\tau)\int_{0}^{\infty}dy\rho(y,\tau)p(y,\xi)d\xi d\tau. 
\end{eqnarray}
Then we have 
\begin{eqnarray}
\label{14141}
&&\displaystyle\sup_{x\geq 0}|\rho(x,t)|\leq 2 \lVert \rho_0\rVert_{\infty}+c_1\lVert V\rVert_{\infty}\int_{0}^{t}\frac{\sup_{\xi\geq 0}|\rho(\xi,\tau)| }{\sqrt{t-\tau}}d\tau  \nonumber\\
&&\hspace{2.5cm}+c_2\int_{0}^{t}\sup_{y\geq 0}|\rho(y,\tau)|d\tau.
\end{eqnarray}
Applying Lemma \ref{lem.1} on \eqref{14141}, we have
\begin{eqnarray*}
\label{}
&&\displaystyle\sup_{x\geq 0}|\rho(x,t)|\leq (1+2c_1\lVert V\rVert_{\infty}\sqrt{T})\exp\left\{\pi {c_1}^{2}\lVert V\rVert_{\infty}^{2}T\right\}\left(2 \lVert \rho_0\rVert_{\infty}+c_2\int_{0}^{t}\sup_{y\geq 0}|\rho(y,\tau)|d\tau\right).
\end{eqnarray*}
By Gronwall's lemma and $\lVert V\rVert_{\infty}\leq |V_0|+A\sqrt{T}$, we obtain for some $C_1\in\mathcal{S}$,
\begin{eqnarray}
\label{21}
\displaystyle \lVert \rho \rVert_{\infty}\leq C_1(A,T).
\end{eqnarray}

Also we can write $\rho_x$ as
\begin{eqnarray}
\label{678910}
&& \displaystyle \rho_x(x,t)=\int_{0}^{\infty}[K(x,t;\xi,0)+K(x,t;-\xi,0)]\varphi(\xi)d\xi\nonumber \\
&&\hskip 0.8cm+\int_{0}^{t}\int_{0}^{\infty}G_x(x,t;\xi,\tau)\left[V_\tau\rho_{\xi}(\xi,\tau)+\int_{0}^{\infty}dy \rho(y,\tau)p(y,\xi)\right]d\xi d\tau.
\end{eqnarray}
Then we have
\begin{eqnarray}
\label{141412}
\displaystyle\sup_{x\geq 0}|\rho_x(x,t)|\leq 2 \lVert \varphi\rVert_{\infty}+c_3\lVert V \rVert_{\infty}\int_{0}^{t}\frac{\sup_{\xi\geq 0}|\rho_\xi(\xi,\tau)| }{\sqrt{t-\tau}}d\tau+c_4\sqrt{T}\lVert \rho \rVert_{\infty}.
\end{eqnarray}
Using \eqref{21} and Lemma \ref{lem.1}, we also obtain for some $C_2\in\mathcal{S}$,
\begin{eqnarray}
\label{22}
\displaystyle \lVert \rho_x \rVert_{\infty}\leq C_2(A,T).
\end{eqnarray}

Taking absolute value on both sides of \eqref{145} and integrating them with respect to $x$, we have 
\begin{eqnarray}
\label{}
&&\displaystyle \lVert\rho(\cdot,t)\rVert_{L^{1}}\leq 2+c_5\lVert V\rVert_{\infty}\int_{0}^{t}\frac{\lVert \rho(\cdot,\tau) \rVert_{L^{1}} }{\sqrt{t-\tau}}d\tau+2\int_{0}^{t}\lVert \rho(\cdot,\tau) \rVert_{L^{1}}d\tau.
\end{eqnarray}
Similarly, we get for some $C_3\in\mathcal{S}$,
\begin{eqnarray}
\displaystyle \sup_{0\leq t\leq T}\lVert\rho(\cdot,t)\rVert_{L^{1}}\leq C_3(A,T).
\end{eqnarray}

Using \eqref{123456789} and integration by parts, we have
\begin{eqnarray}
\label{1414}
&&\displaystyle\sup_{x\geq 0}|u(x,t)|\leq \lVert g\rVert_{\infty}+2 \lVert \varphi\rVert_{\infty}+c_6\sqrt{T}\lVert V \rVert_{\infty}\lVert g\rVert_{\infty} \nonumber\\
&&\hspace{2cm}+c_7\lVert V \rVert_{\infty}\int_{0}^{t}\frac{\sup_{\xi\geq 0}|u(\xi,\tau)| }{\sqrt{t-\tau}}d\tau+c_8\int_{0}^{t}\sup_{y\geq 0}|u(y,\tau)|d\tau.
\end{eqnarray}
Using 
\begin{eqnarray}
\displaystyle \lVert g\rVert_{\infty}\leq  2\sup_{0\leq t\leq T} \lVert\rho(\cdot,t)\rVert_{L^{1}}\leq 2C_3(A,T)
\end{eqnarray}
and we apply both Gronwall's lemma and Lemma \ref{lem.1} on \eqref{1414}, we obtain for some $C_4\in\mathcal{S}$,
\begin{eqnarray}
\label{}
\displaystyle \lVert u\rVert_{\infty}\leq C_4(A,T).
\end{eqnarray}
By integration by parts, we have 
\begin{eqnarray}
\label{1313}
|g^{\prime}(t)|\leq c_9|\rho_x(0,t)|+(c_{10}|V_t|+c_{11})\lVert \rho(\cdot,t)\rVert_{L^{1}}.
\end{eqnarray}
Using \eqref{1313}, \eqref{6789} and  similar arguments as above, we finally get for some $C_5, C_6\in\mathcal{S}$,
\begin{eqnarray}
\displaystyle \lVert u_x\rVert_{\infty}\leq C_5(A,T),\ \ \sup_{0\leq t\leq T}\lVert u(\cdot,t)\rVert_{L^{1}}\leq C_6(A,T).
\end{eqnarray}
\end{proof}
%From now on, let us assume that
%\begin{eqnarray*}
%\displaystyle\varphi^{\prime\prime}(0)=2\left[g^{\prime}(0)-\overset{\bullet}{X_{0}}\varphi^{\prime}(0)-\int_{0}^{\infty}dy \varphi(y)p(y,0) \right].
%\end{eqnarray*}
%which is equivalent to
%\begin{eqnarray*}
%&&\displaystyle\rho_0^{\prime\prime\prime}(b)=2\bigg\{\int_{b}^{\infty}\int_{b}^{\infty}dy\rho_0^{\prime\prime}(y)p(y,x)dx+2\overset{\bullet}{X_{0}}\int_{b}^{\infty}\int_{b}^{\infty}dy\rho_0^{\prime}(y)p(y,x)dx\\
%&&\hspace{1.5cm}+2\int_{b}^{\infty}\int_{b}^{\infty}dy\int_{b}^{\infty}dz\rho_0(z)p(z,y)p(y,x)dx-\overset{\bullet}{X_{0}}\rho_0^{\prime\prime}(b)-\int_{b}^{\infty}dy\rho_0^{\prime}(y)p(y,b)\bigg\}.
%\end{eqnarray*}
%where $\displaystyle\overset{\bullet}{X_{0}}=V_0=\frac{-\frac{1}{2}\rho_0^{\prime\prime}(b)+\int_{b}^{\infty}\int_{b}^{\infty}dy\rho_0^{\prime}(y)p(y,x)dx }{\rho_0^{\prime}(b)}$.
\begin{lem}
\label{lem.3}
Let $V\in\Sigma(A,T)$. For $(V,u)$ as in Proposition \ref{prop.2}, then there exists $u_{xx}\in C(\overline{D_{T}} \setminus \mathbf{0})$ such that $\displaystyle \lVert u_{xx}\rVert_{\infty}\leq C_7(A,T)$ for some $C_7\in\mathcal{S}$.
\end{lem}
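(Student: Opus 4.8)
The plan is to differentiate once more in $x$ the representation \eqref{6789} of $u_x$ and to estimate the three resulting terms by means of Lemma \ref{lem.1} together with the two elementary, scaling‑invariant bounds
$\int_0^t|K_x(x,t;0,\tau)|\,d\tau\le 1$ and
$\int_0^\infty|K_{xx}(x,t;\xi,\tau)|\,|x-\xi|^{\alpha}\,d\xi\le c\,(t-\tau)^{\alpha/2-1}$ for $0\le\alpha\le 1$, both uniform in $x\ge 0$, $0<t\le T$ and obtained from the substitution $\xi=x+\sqrt{t-\tau}\,z$; the second holds also for the image kernel $K(x,t;-\xi,\tau)$ with the weight $(x+\xi)^{\alpha}$, and $|x-\xi|\le x+\xi$ since $x,\xi\ge 0$. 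The key input that makes this work is that $u_x(\cdot,t)$ is H\"older continuous in $x$ with exponent $\tfrac12$, uniformly for $t\in[0,T]$, with seminorm in $\mathcal{S}$; this is the step I would establish first.

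\emph{Step 1: H\"older regularity of $u_x$ in $x$.} Set $f(\xi,\tau):=V_\tau u_\xi(\xi,\tau)+\int_0^\infty u(y,\tau)p(y,\xi)\,dy$; by Lemma \ref{lem.2} and $|V_\tau|\le|V_0|+A\sqrt T$ one has $\|f\|_\infty\le C(A,T)\in\mathcal{S}$. I estimate the $x$-increment of each term of \eqref{6789}. For the first term, $|g'|$ is bounded by \eqref{1313} and Lemma \ref{lem.2}, and $\int_0^t|K(x_1,t;0,\tau)-K(x_2,t;0,\tau)|\,d\tau\le c\,|x_1-x_2|^{1/2}$ by splitting the $\tau$-integral at $t-\tau=|x_1-x_2|^2$ (small part: $K\le c(t-\tau)^{-1/2}$; large part: mean value theorem and $\sup_x|K_x|\le c(t-\tau)^{-1}$). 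The second term is in fact Lipschitz in $x$: integrating by parts in $\xi$, using that $\rho_0$ has compact support and $G(x,t;0,0)=0$, its $x$-derivative equals $\int_0^\infty G(x,t;\xi,0)\rho_0'''(\xi)\,d\xi$, of modulus $\le\|\rho_0'''\|_\infty$. The double-integral term is handled by $\int_0^\infty|G_x(x_1,t;\xi,\tau)-G_x(x_2,t;\xi,\tau)|\,d\xi\le c\min\{(t-\tau)^{-1/2},|x_1-x_2|(t-\tau)^{-1}\}$ and the same splitting, the bracketed factor being $\le\|f\|_\infty$. Hence $\sup_{0\le t\le T}|u_x(\cdot,t)|_{\frac12}\le C(A,T)\in\mathcal{S}$, and therefore $f(\cdot,\tau)$ is H\"older-$\tfrac12$ in $\xi$ with $\sup_\tau|f(\cdot,\tau)|_{\frac12}\le C(A,T)\in\mathcal{S}$ (the term $\int u(y,\tau)p(y,\xi)\,dy$ is Lipschitz in $\xi$ since $p(0,\cdot)\in C^1$ and $\|u(\cdot,\tau)\|_{L^1}\le C_6(A,T)$).

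\emph{Step 2: differentiation of \eqref{6789} and the estimate.} Differentiating \eqref{6789} in $x$, the first term becomes $-2\int_0^tK_x(x,t;0,\tau)g'(\tau)\,d\tau$, of modulus $\le 2\|g'\|_\infty$; the second becomes $\int_0^\infty G(x,t;\xi,0)\rho_0'''(\xi)\,d\xi$ as in Step 1, of modulus $\le\|\rho_0'''\|_\infty$; the third equals $W_{xx}$, with $W$ the quarter‑plane heat potential of $f$. Since $\int_0^t\int_0^\infty G_{xx}(x,t;\xi,\tau)f\,d\xi\,d\tau$ is not absolutely convergent near $\tau=t$, I use the classical H\"older‑subtraction representation (see \cite{Cannon}): writing $G=K(x,t;\xi,\tau)-K(x,t;-\xi,\tau)$, subtracting $f(x,\tau)$ in the first piece and $f(0,\tau)$ in the second (this choice is forced: near $x=0$ the image kernel concentrates at $\xi=0$, not at $\xi=x$), and using $\int_0^\infty K_{xx}(x,t;\xi,\tau)\,d\xi=K_x(x,t;0,\tau)=-\int_0^\infty K_{xx}(x,t;-\xi,\tau)\,d\xi$,
\begin{eqnarray*}
W_{xx}(x,t)&=&\int_0^t\!\int_0^\infty K_{xx}(x,t;\xi,\tau)\big[f(\xi,\tau)-f(x,\tau)\big]\,d\xi\,d\tau-\int_0^t\!\int_0^\infty K_{xx}(x,t;-\xi,\tau)\big[f(\xi,\tau)-f(0,\tau)\big]\,d\xi\,d\tau\\
&&+\int_0^t\big(f(x,\tau)+f(0,\tau)\big)K_x(x,t;0,\tau)\,d\tau.
\end{eqnarray*}
The first two integrals are bounded, via the weighted kernel bound with $\alpha=\tfrac12$ and $|f(\xi,\tau)-f(x,\tau)|\le|f(\cdot,\tau)|_{\frac12}|\xi-x|^{1/2}$ (resp. $|f(\xi,\tau)-f(0,\tau)|\le|f(\cdot,\tau)|_{\frac12}(x+\xi)^{1/2}$), by $c\,\sup_\tau|f(\cdot,\tau)|_{\frac12}\int_0^t(t-\tau)^{-3/4}\,d\tau\le c\,T^{1/4}\sup_\tau|f(\cdot,\tau)|_{\frac12}$; the last integral is $\le 2\|f\|_\infty\int_0^t|K_x(x,t;0,\tau)|\,d\tau\le 2\|f\|_\infty$. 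Adding the three contributions, $u_{xx}$ exists on $D_T$ and $\|u_{xx}\|_\infty\le C_7(A,T)$, where $C_7$ is built from $\|\rho_0'''\|_\infty$, $\|g'\|_\infty$, $\|f\|_\infty$ and $\sup_\tau|f(\cdot,\tau)|_{\frac12}$; by Lemma \ref{lem.2} and Step 1 these are bounded by elements of $\mathcal{S}$, so $C_7\in\mathcal{S}$. For continuity: all kernels are jointly continuous, so $u_{xx}\in C(D_T)$; the two layer terms extend continuously to $\{x=0,\,0<t\le T\}$ by the jump relations of \cite{Cannon} (using that $g'$ and $f$ are continuous), the off‑diagonal integrals extend continuously to $t=0$ (where they vanish), and $\int_0^\infty G(x,t;\xi,0)\rho_0'''(\xi)\,d\xi\to\rho_0'''(x)$ as $t\downarrow0$ for $x>0$ while it vanishes on $\{x=0\}$, so the limit at $(0,0)$ is path‑dependent unless $\rho_0'''(0)=0$ — this is why $\mathbf{0}$ is excluded, giving $u_{xx}\in C(\overline{D_T}\setminus\mathbf{0})$.

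The hard part is Step 1 — the uniform‑in‑$t$ H\"older‑$\tfrac12$ bound on $u_x$ in $x$ — together with the correct H\"older subtraction in $W_{xx}$, in particular for the image kernel near the lateral boundary $x=0$, where subtracting $f(0,\tau)$ rather than $f(x,\tau)$ is essential; this is precisely the regime in which the weak initial and boundary data of the problem obstruct a direct use of \cite{LSU}.
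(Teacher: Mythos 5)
Your proposal is correct in substance but follows a genuinely different route from the paper. The paper never differentiates the potential with a Hölder-subtraction argument: it defines the candidate $v$ for $u_{xx}$ as the fixed point of a contraction built from \eqref{6789} after an integration by parts that moves the extra derivative onto the data — onto $p_y$ in the nonlocal term, onto $\varphi''$ in the initial term, and onto the boundary density $g'(\tau)-V_\tau u_x(0,\tau)-\int_0^\infty u(y,\tau)p(y,0)\,dy$ — so that only the integrable kernel $K_x$ appears; it then identifies $v$ with $u_{xx}$ by showing that $f(x,t)=u_x(x,t)+\int_x^\infty v(y,t)\,dy$ satisfies a homogeneous Volterra equation and hence vanishes, and finally gets $\lVert u_{xx}\rVert_\infty\le C_7(A,T)$ from Lemma \ref{lem.1} and Lemma \ref{lem.2}. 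This buys existence, continuity on $\overline{D_T}\setminus\mathbf{0}$ and the bound in one stroke, using only $L^\infty$ information on $u$, $u_x$, $g'$, and it sidesteps any regularity of the source in $x$. Your route instead first establishes the uniform-in-$t$ Hölder-$\tfrac12$ bound on $u_x(\cdot,t)$ (a genuine extra piece of information, not in the paper) and then invokes the classical subtraction representation for the second derivative of the heat potential, adapted to the quarter-plane Green function by subtracting $f(0,\tau)$ in the image term; your kernel estimates ($\int_0^t|K_x(x,t;0,\tau)|d\tau\le1$, the weighted $K_{xx}$ bound) are correct and the resulting constant is indeed in $\mathcal{S}$. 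Two points deserve tightening rather than being gaps: the bound $\int_0^\infty|G_x(x_1,t;\xi,\tau)-G_x(x_2,t;\xi,\tau)|\,d\xi\le c|x_1-x_2|(t-\tau)^{-1}$ should be obtained by writing the difference as $\int_{x_2}^{x_1}G_{xx}(s,t;\xi,\tau)\,ds$ and integrating in $\xi$ first (a pointwise mean-value bound followed by $\sup_x$ inside the $\xi$-integral diverges); and the subtraction formula you quote is stated in the literature for whole-line potentials with densities Hölder in $x$ uniformly in $t$, so the half-line version with the image kernel (your $f(0,\tau)$ subtraction and the jump relations at $x=0$) needs to be carried out explicitly — which is essentially the extra work the paper's contraction construction is designed to avoid.
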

\begin{proof}
By differentiating \eqref{6789} with respect to spatial variable, let us define a mapping $\mathcal{F}:\mathscr{B_\eta}\rightarrow \mathscr{B_\eta}$ where $\displaystyle \mathscr{B_\eta}=\{v(x,t)\in C([0,\infty)\times[0,\eta]\setminus\mathbf{0}) : \lVert v \rVert_{\infty}<\infty\}$ as 
\begin{itemize}
\item if $x\ne 0$, $t\ne0$,
\begin{eqnarray}
&&\mathcal{F}v(x,t):=-2\int_{0}^{t}K_x(x,t;0,\tau)\left[g^{\prime}(\tau)-V_\tau u_x(0,\tau)-\int_{0}^{\infty}dy u(y,\tau)p(y,0) \right]d\tau\nonumber\\
&&\hskip 2cm+\int_{0}^{\infty}[K_x(x,t;\xi,0)+K_x(x,t;-\xi,0)    ]\varphi^{\prime}(\xi)dx \nonumber \\
&&\hskip 1.5cm+\int_{0}^{t}\int_{0}^{\infty}[K_x(x,t;\xi,\tau)+K_x(x,t;-\xi,\tau)]V_\tau v(\xi,\tau)d\xi d\tau\nonumber\\
&&-\int_{0}^{t}\int_{0}^{\infty}[K_x(x,t;\xi,\tau)+K_x(x,t;-\xi,\tau)]\int_{0}^{\infty}dy u(y,\tau)p_y(y,\xi) d\xi d\tau,
\end{eqnarray}
\item if $x=0$, $t\ne 0$, $\displaystyle\mathcal{F}v(0,t):=2\left[g^{\prime}(t)-V_t u_x(0,t)-\int_{0}^{\infty}dy u(y,t)p(y,0) \right]$,

\item if $x\ne 0$, $t=0$, $\mathcal{F}v(x,0):=\varphi^{\prime\prime}(x)$.
\end{itemize}

Then for $v_1, v_2\in \mathscr{B_\eta}$, we have the following estimate:
\begin{eqnarray*}
\lVert \mathcal{F}v_1-\mathcal{F}v_2 \rVert_{\infty}\leq c_1\sqrt{\eta}\lVert V \rVert_{\infty}\lVert v_1-v_2 \rVert_{\infty}.
\end{eqnarray*}
Thus for all sufficiently small $\eta>0$, $\mathcal{F}$ is a contraction mapping so that there is a unique fixed point $v^\eta$. We can extend $\eta$ to $T$ by a similar way of the proof of Proposition \ref{prop.1}. Let us say a unique $v\in C(\overline{D_{T}}\setminus\mathbf{0})$ such that for $x\ne 0$, $t\ne 0$,
\begin{eqnarray}
\label{123141}
&& v(x,t)=-2\int_{0}^{t}K_x(x,t;0,\tau)\left[g^{\prime}(\tau)-V_\tau u_x(0,\tau)-\int_{0}^{\infty}dy u(y,\tau)p(y,0) \right]d\tau \nonumber\\
&&\hskip 2cm+\int_{0}^{\infty}[K_x(x,t;\xi,0)+K_x(x,t;-\xi,0)    ]\varphi^{\prime}(\xi)d\xi\nonumber\\
&&\hskip 1.5cm+\int_{0}^{t}\int_{0}^{\infty}[K_x(x,t;\xi,\tau)+K_x(x,t;-\xi,\tau)]V_\tau v(\xi,\tau)d\xi d\tau\nonumber\\
&&-\int_{0}^{t}\int_{0}^{\infty}[K_x(x,t;\xi,\tau)+K_x(x,t;-\xi,\tau)]\int_{0}^{\infty}dy u(y,\tau)p_y(y,\xi) d\xi d\tau.
\end{eqnarray}
By integrating \eqref{123141} with respect to spatial variable on both sides and using integration by parts, we obtain for $x\ge0$, $t>0$,
\begin{eqnarray}
\label{1231415}
&& \int_{x}^{\infty}v(y,t)dy=2\int_{0}^{t}K(x,t;0,\tau)\left[g^{\prime}(\tau)-V_\tau u_x(0,\tau)-\int_{0}^{\infty}dy u(y,\tau)p(y,0) \right]d\tau \nonumber\\
&&\hskip 2cm+\int_{0}^{\infty}[-K(x,t;\xi,0)-K(x,t;-\xi,0)    ]\varphi^{\prime}(\xi)d\xi\nonumber\\
&&\hskip 1.5cm+\int_{0}^{t}\int_{0}^{\infty}[-K(x,t;\xi,\tau)-K(x,t;-\xi,\tau)]V_\tau v(\xi,\tau)d\xi d\tau\nonumber\\
&&\hskip 1cm+\int_{0}^{t}\int_{0}^{\infty}[K(x,t;\xi,\tau)+K(x,t;-\xi,\tau)]\int_{0}^{\infty}dy u(y,\tau)p_y(y,\xi) d\xi d\tau\nonumber\\
&&\hskip 2cm=2\int_{0}^{t}K(x,t;0,\tau)\left[g^{\prime}(\tau)-V_\tau u_x(0,\tau)\right]d\tau \nonumber \\
&&\hskip 2cm+\int_{0}^{\infty}[-K(x,t;\xi,0)-K(x,t;-\xi,0)    ]\varphi^{\prime}(\xi)d\xi\nonumber\\
&&+\int_{0}^{t}\int_{0}^{\infty} G_x(x,t;\xi,\tau)V_\tau \int_{\xi}^{\infty}v(y,\tau)dy d\xi d\tau-2\int_{0}^{t}K(x,t;0,\tau)V_\tau\int_{0}^{\infty}v(y,\tau)dy d\tau\nonumber\\
&&\hskip 1cm-\int_{0}^{t}\int_{0}^{\infty}G_x(x,t;\xi,\tau)\int_{0}^{\infty}dy u(y,\tau)p(y,\xi) d\xi d\tau.
\end{eqnarray}
Adding \eqref{6789} to \eqref{1231415}, we get
\begin{eqnarray}
\label{34141} 
&& u_x(x,t)+\int_{x}^{\infty}v(y,t)dy=2\int_{0}^{t}K(x,t;0,\tau)V_\tau\left\{-u_x(0,\tau)-\int_{0}^{\infty}v(y,\tau)dy\right\}d\tau \nonumber\\
&&\hskip 2cm+\int_{0}^{t}\int_{0}^{\infty}G_x(x,t;\xi,\tau)V_\tau\left\{u_{\xi}(\xi,\tau)+\int_{\xi}^{\infty}v(y,\tau)dy\right\}d\xi d\tau.
\end{eqnarray}

By letting $\displaystyle f(x,t):= u_x(x,t)+\int_{x}^{\infty}v(y,t)dy\in C(\overline{D_{T}})$, \eqref{34141} becomes
\begin{eqnarray*}
\label{2313131}
f(x,t)=-2\int_{0}^{t}K(x,t;0,\tau)V_\tau f(0,\tau)d\tau+\int_{0}^{t}\int_{0}^{\infty}G_x(x,t;\xi,\tau)V_\tau f(\xi,\tau)d\xi d\tau.
\end{eqnarray*}
By the uniqueness of the contraction mapping argument, $f$ should be identically $0$. Thus we conclude $v=u_{xx}\in C(\overline{D_{T}}\setminus\mathbf{0})$ and \eqref{123141}
becomes
\begin{eqnarray*}
\label{}
&& u_{xx}(x,t)=-2\int_{0}^{t}K_x(x,t;0,\tau)\left[g^{\prime}(\tau)-V_\tau u_x(0,\tau)-\int_{0}^{\infty}dy u(y,\tau)p(y,0) \right]d\tau\\
&&\hskip 2cm+\int_{0}^{\infty}G(x,t;\xi,0)\varphi^{\prime\prime}(\xi)d\xi\\
&&\hskip 2cm+\int_{0}^{t}\int_{0}^{\infty}[K_x(x,t;\xi,\tau)+K_x(x,t;-\xi,\tau)]V_\tau u_{xx}(\xi,\tau)d\xi d\tau\\
&&\hskip 2cm-\int_{0}^{t}\int_{0}^{\infty}[K_x(x,t;\xi,\tau)+K_x(x,t;-\xi,\tau)]\int_{0}^{\infty}dy u(y,\tau)p_y(y,\xi) d\xi d\tau.
\end{eqnarray*}
Then we have
\begin{eqnarray}
\label{3415}
&&\displaystyle \sup_{x\ge0}|u_{xx}(x,t)|\leq 2(\lVert g^{\prime} \rVert_{\infty}+\lVert V\rVert_{\infty}\lVert u_x\rVert_{\infty}+\lVert u\rVert_{\infty})+2\lVert \varphi^{\prime\prime}\rVert_{\infty} \nonumber\\
&&\hspace{3cm}+c_1\lVert V\rVert_{\infty}\int_{0}^{t}\frac{\sup_{\xi\ge0}|u_{xx}(\xi,\tau)| }{\sqrt{t-\tau}}d\tau+c_2\sqrt{T}\lVert u\rVert_{\infty}.
\end{eqnarray}
By applying Lemma \ref{lem.1} and Lemma \ref{lem.2} on \eqref{3415}, we deduce that for some $C_7\in\mathcal{S}$,
\begin{eqnarray*}
\displaystyle \lVert u_{xx}\rVert_{\infty}\leq C_7(A,T).
\end{eqnarray*}
\end{proof}

\begin{lem}
\label{lem.4}
Let $V\in\Sigma(A,T)$. For $(V,u)$ as in Proposition \ref{prop.2}, then $u_x(0,t)$ is H\"{o}lder continuous with exponent $\dfrac{1}{2}$ with $\displaystyle |u_x(0,\cdot)|_{\frac{1}{2}}\leq C_8(A,T)$ for some $C_8\in\mathcal{S}$.
\end{lem}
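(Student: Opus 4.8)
The plan is to read off the boundary value of the representation \eqref{6789} and then estimate the three resulting terms separately. Putting $x=0$ in \eqref{6789} and using $K(0,t;\xi,0)=K(0,t;-\xi,0)$, $K(0,t;0,\tau)=(2\pi(t-\tau))^{-1/2}$ and $G_x(0,t;\xi,\tau)=\frac{2\xi}{t-\tau}K(0,t;\xi,\tau)=:N(0,t;\xi,\tau)$, one gets
\begin{eqnarray*}
u_x(0,t)=-\sqrt{\tfrac2\pi}\int_{0}^{t}\frac{g'(\tau)}{\sqrt{t-\tau}}d\tau+2\int_{0}^{\infty}K(0,t;\xi,0)\varphi'(\xi)d\xi+\int_{0}^{t}\int_{0}^{\infty}N(0,t;\xi,\tau)F(\xi,\tau)d\xi d\tau,
\end{eqnarray*}
where $F(\xi,\tau):=V_\tau u_\xi(\xi,\tau)+\int_{0}^{\infty}u(y,\tau)p(y,\xi)dy$; call the three terms $I_1,I_2,I_3$. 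From \eqref{1313} together with Lemma \ref{lem.2} we have $\lVert g'\rVert_{\infty}\le C(A,T)$, and again by Lemma \ref{lem.2} we have $\lVert F\rVert_{\infty}\le C(A,T)$ for constants of class $\mathcal S$; moreover $\lVert\varphi''\rVert_{\infty}=\lVert\rho_0'''\rVert_{\infty}<\infty$ by the assumption $\rho_0\in C^3$ with compact support. It therefore suffices to bound the $t$-increments of $I_1,I_2,I_3$ by a multiple of $\sqrt{t_2-t_1}$ times these quantities.

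For $I_1$ I would fix $0\le t_1<t_2\le T$ and split $\int_0^{t_2}=\int_0^{t_1}+\int_{t_1}^{t_2}$: the tail integral is at most $2\lVert g'\rVert_{\infty}\sqrt{t_2-t_1}$, and for the rest $\big|\tfrac1{\sqrt{t_2-\tau}}-\tfrac1{\sqrt{t_1-\tau}}\big|=\tfrac1{\sqrt{t_1-\tau}}-\tfrac1{\sqrt{t_2-\tau}}$ integrates over $[0,t_1]$ to $2(\sqrt{t_1}+\sqrt{t_2-t_1}-\sqrt{t_2})\le 2\sqrt{t_2-t_1}$, so $|I_1(t_2)-I_1(t_1)|\le c\lVert g'\rVert_{\infty}\sqrt{t_2-t_1}$. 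For $I_2$ the substitution $\xi=\sqrt t\,\eta$ gives $I_2(t)=\tfrac{2}{\sqrt{2\pi}}\int_0^\infty e^{-\eta^2/2}\varphi'(\sqrt t\,\eta)d\eta$, whence $|I_2(t_2)-I_2(t_1)|\le\tfrac{2}{\sqrt{2\pi}}\lVert\varphi''\rVert_{\infty}\,|\sqrt{t_2}-\sqrt{t_1}|\int_0^\infty\eta e^{-\eta^2/2}d\eta\le c\lVert\varphi''\rVert_{\infty}\sqrt{t_2-t_1}$; this is the only place where the $C^3$-regularity of $\rho_0$ enters.

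The term $I_3$ is the main obstacle. One computes $\int_0^\infty N(0,t;\xi,\tau)d\xi=\sqrt{2/(\pi(t-\tau))}$ and $\int_0^\infty|\partial_s N(0,s;\xi,\tau)|d\xi\le c\,(s-\tau)^{-3/2}$, but the naive estimate $|N(0,t_2;\xi,\tau)-N(0,t_1;\xi,\tau)|\le\int_{t_1}^{t_2}|\partial_sN(0,s;\xi,\tau)|ds$ is not integrable in $\tau$ up to $t_1$, so a thin layer near $\tau=t_1$ must be peeled off. Writing $\delta=t_2-t_1$ and
$$I_3(t_2)-I_3(t_1)=\int_0^{t_1}\!\!\int_0^\infty[N(0,t_2;\xi,\tau)-N(0,t_1;\xi,\tau)]F\,d\xi d\tau+\int_{t_1}^{t_2}\!\!\int_0^\infty N(0,t_2;\xi,\tau)F\,d\xi d\tau,$$
I would first treat the case $\delta\le t_1$: split the first $\tau$-integral into $[0,t_1-\delta]$ and $[t_1-\delta,t_1]$. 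On $[t_1-\delta,t_1]$ and on $[t_1,t_2]$ one uses $\int_0^\infty|N|d\xi\le c(t_i-\tau)^{-1/2}$ directly, noting $t_2-\tau\ge\delta$ on $[t_1-\delta,t_1]$, which gives an $O(\sqrt\delta)$ bound; on $[0,t_1-\delta]$ one has $t_1-\tau\ge\delta$, so $\int_0^\infty|N(0,t_2;\xi,\tau)-N(0,t_1;\xi,\tau)|d\xi\le c\,\delta(t_1-\tau)^{-3/2}$ and $\int_0^{t_1-\delta}\delta(t_1-\tau)^{-3/2}d\tau\le 2\sqrt\delta$. In the complementary case $\delta>t_1$ (so $t_2<2\delta$) one bounds crudely $|I_3(t_2)-I_3(t_1)|\le|I_3(t_2)|+|I_3(t_1)|\le c\lVert F\rVert_{\infty}(\sqrt{t_2}+\sqrt{t_1})\le c'\lVert F\rVert_{\infty}\sqrt{t_2-t_1}$. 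Adding the three contributions yields $|u_x(0,t_2)-u_x(0,t_1)|\le C_8(A,T)\sqrt{t_2-t_1}$ with $C_8\in\mathcal S$ assembled from the constants of Lemma \ref{lem.2} and the absolute constant coming from $I_2$.
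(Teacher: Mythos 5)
Your proposal is correct, and for the singular source term it takes a genuinely different route from the paper. The treatments of $I_1$ (splitting off the tail $[t_1,t_2]$ and integrating the kernel difference on $[0,t_1]$) and of $I_2$ (the substitution $\xi=\sqrt t\,\eta$ plus $|\varphi'(\sqrt{t_2}\eta)-\varphi'(\sqrt{t_1}\eta)|\le\lVert\varphi''\rVert_\infty\,\eta\,|\sqrt{t_2}-\sqrt{t_1}|$) coincide with what the paper does. For $I_3$, however, the paper rescales $z=\xi/\sqrt{t-\tau}$ so that the kernel becomes $t$-independent, writes the term as $\int_0^t H(t,\tau)(t-\tau)^{-1/2}d\tau$, and obtains $|H(t_2,\tau)-H(t_1,\tau)|\le C(A,T)\sqrt{t_2-t_1}$ from the \emph{spatial} Lipschitz continuity of $u_x$ (i.e.\ the bound on $u_{xx}$ from Lemma \ref{lem.3}) together with $p\in C^1$; you instead keep the kernel $N(0,t;\xi,\tau)=G_x(0,t;\xi,\tau)$ and estimate its time increment directly, using $\int_0^\infty|N|\,d\xi\le c(t-\tau)^{-1/2}$, $\int_0^\infty|\partial_sN|\,d\xi\le c(s-\tau)^{-3/2}$, and the peeling of a layer of width $\delta=t_2-t_1$ near $\tau=t_1$ (with the separate crude case $\delta>t_1$). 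Your kernel-increment argument buys something: it needs only the sup bounds of Lemma \ref{lem.2} on $F=V u_x+\int u\,p$, so within this lemma it dispenses with Lemma \ref{lem.3} and with any smoothness of $p$, and it would work verbatim for any bounded inhomogeneity; the paper's change-of-variables argument is shorter at this point precisely because the second-derivative bound is already available (and is needed elsewhere anyway, e.g.\ in Lemma \ref{lem.5}), and it localizes all the $t$-dependence in the smooth arguments of $u_x$ and $p$. Both routes assemble $C_8$ from constants of class $\mathcal S$ (via \eqref{1313} and Lemma \ref{lem.2} for $\lVert g'\rVert_\infty$ and $\lVert F\rVert_\infty$, and $\lVert\varphi''\rVert_\infty$ from the standing assumptions), so your conclusion matches the statement.
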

\begin{proof}
\begin{eqnarray*}
\label{}
&& u_x(0,t)=-2\int_{0}^{t}\frac{g^{\prime}(\tau)}{\sqrt{2\pi(t-\tau)}}d\tau+2\int_{0}^{\infty}\frac{1}{\sqrt{2\pi t}}\exp\left\{-\frac{\xi^2}{2t} \right\} \varphi^{\prime}(\xi)d\xi\\
&&+\int_{0}^{t}\int_{0}^{\infty}\frac{1}{\sqrt{2\pi(t-\tau)}}\frac{2\xi}{t-\tau}\exp\left\{-\frac{\xi^2}{2(t-\tau)}\right\}\left[V_\tau u_{\xi}(\xi,\tau)+\int_{0}^{\infty}dy u(y,\tau)p(y,\xi)\right]d\xi d\tau
\end{eqnarray*}
By change of variable, $\displaystyle w=\frac{\xi}{\sqrt{t}}$ and $\displaystyle z=\frac{\xi}{\sqrt{t-\tau}}$, we have
\begin{eqnarray*}
\label{}
&& u_x(0,t)=-2\int_{0}^{t}\frac{g^{\prime}(\tau)}{\sqrt{2\pi(t-\tau)}}d\tau+2\int_{0}^{\infty}\frac{1}{\sqrt{2\pi}}\exp\left\{-\frac{w^2}{2} \right\} \varphi^{\prime}(w\sqrt{t})dw\\
&&+\int_{0}^{t}\frac{1}{\sqrt{t-\tau}}\int_{0}^{\infty}\frac{2z}{\sqrt{2\pi}}\exp\left\{-\frac{z^2}{2}\right\}\left[V_\tau u_{\xi}(z\sqrt{t-\tau},\tau)+\int_{0}^{\infty}dy u(y,\tau)p(y,z\sqrt{t-\tau})\right]dz d\tau\\
&&=-2\int_{0}^{t}\frac{g^{\prime}(\tau)}{\sqrt{2\pi(t-\tau)}}d\tau+2\int_{0}^{\infty}\frac{1}{\sqrt{2\pi}}\exp\left\{-\frac{w^2}{2} \right\} \varphi^{\prime}(w\sqrt{t})dw+\int_{0}^{t}\frac{H(t,\tau)}{\sqrt{t-\tau}}d\tau,
\end{eqnarray*}
where $\displaystyle H(t,\tau)=\int_{0}^{\infty}\frac{2z}{\sqrt{2\pi}}\exp\left\{-\frac{z^2}{2}\right\}\left[V_\tau u_{\xi}(z\sqrt{t-\tau},\tau)+\int_{0}^{\infty}dy u(y,\tau)p(y,z\sqrt{t-\tau})\right]dz$.
\vskip 0.3cm
Moreover, we get for $t_1<t_2$,
\begin{eqnarray*}
&&\displaystyle\left|\int_{0}^{t_2}\frac{H(t_2,\tau)}{\sqrt{t_2-\tau}}d\tau-\int_{0}^{t_1}\frac{H(t_1,\tau)}{\sqrt{t_1-\tau}}d\tau\right|\\
&&\hspace{2cm}=\left|\int_{0}^{t_1}\frac{H(t_2,\tau)-H(t_1,\tau)}{\sqrt{t_2-\tau}}d\tau+\int_{0}^{t_1}H(t_1,\tau)\left[\frac{1}{\sqrt{t_2-\tau}}-\frac{1}{\sqrt{t_1-\tau}}\right]d\tau\right|\\
&&\leq c_1(\sqrt{t_2}-\sqrt{t_2-t_1})\sup_{\tau}|H(t_2,\tau)-H(t_1,\tau)|+c_2\sqrt{t_2-t_1}\sup_{\tau}|H(t_1,\tau)|
\end{eqnarray*}
Then by Lemma \ref{lem.2} and \ref{lem.3}, we obtain
\begin{eqnarray*}
|H(t_2,\tau)-H(t_1,\tau)|\leq C(A,T)\sqrt{t_2-t_1}
\end{eqnarray*}
for some $C\in\mathcal{S}$ and there is $\tilde{C}\in\mathcal{S}$ such that
\begin{eqnarray*}
\label{}
&& \displaystyle |u_x(0,t_2)-u_x(0,t_1)|\leq c_3\lVert g^{\prime}\rVert_{\infty}\sqrt{t_2-t_1}+c_4\sqrt{t_2-t_1}+\tilde{C}(A,T)\sqrt{t_2-t_1}\\
&&\hspace{3.5cm}\leq ( c_3\lVert g^{\prime}\rVert_{\infty}+c_4+\tilde{C}(A,T))\sqrt{t_2-t_1}.
\end{eqnarray*}
Thus we conclude for some $C_8\in\mathcal{S}$,
\begin{eqnarray*}
\displaystyle |u_x(0,\cdot)|_{\frac{1}{2}}\leq C_8(A,T).
\end{eqnarray*}
\end{proof}

\medskip

\begin{lem}
\label{lem.5}
There is $A>0$ such that for all sufficiently small $T>0$, 
$$\displaystyle Q[V](t)=\frac{-\frac{1}{2}u_x(0,t)+\int_{0}^{\infty}\int_{0}^{\infty}u(y,t)p(y,x)dydx}{u(0,t)},\ \ 0\leq t\leq T,$$
maps  $Q:\Sigma(A,T)\longrightarrow\Sigma(A,T)$.
  \end{lem}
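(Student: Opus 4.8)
The plan is to check, for every $V\in\Sigma(A,T)$, the three requirements for $t\mapsto Q[V](t)$ to belong to $\Sigma(A,T)$: that $Q[V]$ is well defined and continuous on $[0,T]$, that $Q[V](0)=V_0$, and that $|Q[V]|_{1/2}\le A$. Throughout let $(V,\rho)$, $(V,u)$ be as in Propositions \ref{prop.1}, \ref{prop.2}, write $g(t):=u(0,t)=2\int_{0}^{\infty}\int_{0}^{\infty}\rho(y,t)p(y,x)\,dy\,dx$ for the denominator and $N(t):=-\frac12 u_x(0,t)+\int_{0}^{\infty}\int_{0}^{\infty}u(y,t)p(y,x)\,dy\,dx$ for the numerator, so $Q[V]=N/g$.

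\textbf{Boundary value and positivity of $g$.} I would first compute $Q[V](0)$. Since $u(\cdot,0)=\rho_0'$ and, by assumption \eqref{intro.6}, $g(0)=2\int_{0}^{\infty}\int_{0}^{\infty}\rho_0(y)p(y,x)\,dy\,dx=\rho_0'(0)>0$, continuity of $u$ up to $t=0$ (Proposition \ref{prop.2}) and dominated convergence give $\int_{0}^{\infty}\int_{0}^{\infty}u(y,t)p(y,x)\,dy\,dx\to\int_{0}^{\infty}\int_{0}^{\infty}\rho_0'(y)p(y,x)\,dy\,dx$ as $t\to0^{+}$. For $u_x(0,t)$ I would take $x=0$ in the representation \eqref{6789}: the first term and the last (double-integral) term are $O(\sqrt t)$, while $\int_{0}^{\infty}[K(0,t;\xi,0)+K(0,t;-\xi,0)]\varphi'(\xi)\,d\xi=2\int_{0}^{\infty}K(0,t;\xi,0)\varphi'(\xi)\,d\xi\to\varphi'(0^{+})=\rho_0''(0)$ as $t\to0^{+}$, an approximate identity on the half-line (using $\rho_0\in C^{3}$). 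Hence $Q[V](0)=\bigl(-\tfrac12\rho_0''(0)+\int_{0}^{\infty}\int_{0}^{\infty}\rho_0'(y)p(y,x)\,dy\,dx\bigr)/\rho_0'(0)=V_0$. Since $g$ is continuous with $g(0)=\rho_0'(0)>0$, for all sufficiently small $T$ we have $g(t)\ge\frac12\rho_0'(0)$ on $[0,T]$, so $Q[V]$ is well defined there; its continuity will follow from the H\"older bound.

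\textbf{H\"older estimate.} Lemma \ref{lem.4} gives $|u_x(0,\cdot)|_{1/2}\le C_8(A,T)$. For $J(t):=\int_{0}^{\infty}\int_{0}^{\infty}u(y,t)p(y,x)\,dy\,dx=\int_{0}^{\infty}u(y,t)P(y)\,dy$, with $P(y):=\int_{0}^{\infty}p(y,x)\,dx\in[0,1]$ and $P\equiv1$ for $y$ beyond the support of $p(0,\cdot)$, I would show $J$ is Lipschitz by differentiating in $t$ through \eqref{intro.17}, $J'(t)=\int_{0}^{\infty}\bigl(\tfrac12 u_{yy}+V_t u_y+\int_{0}^{\infty}u(z,t)p(z,y)\,dz\bigr)P(y)\,dy$, and bounding each piece with Lemmas \ref{lem.2}, \ref{lem.3}; the compact support of $p$, the estimates $\int_{0}^{\infty}p(y,x)\,dx\le1$ and $\int_{0}^{\infty}p(z,y)\,dy\le1$, and the collapse of the tails ($\int_{M}^{\infty}u_{yy}(y,t)\,dy=-u_y(M,t)$, $\int_{M}^{\infty}u_y(y,t)\,dy=-u(M,t)$, using $u,u_x\to0$ as $y\to\infty$, which follows from the identity $u_x(x,t)=-\int_{x}^{\infty}u_{xx}(y,t)\,dy$ established inside the proof of Lemma \ref{lem.3}) make every integral finite, so $\|J'\|_{\infty}\le C(A,T)$ with $C\in\mathcal{S}$. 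Likewise \eqref{1313} and Lemma \ref{lem.2} give $\|g'\|_{\infty}\le C(A,T)\in\mathcal{S}$. Then from
\[
Q[V](t_2)-Q[V](t_1)=\frac{N(t_2)-N(t_1)}{g(t_2)}-N(t_1)\,\frac{g(t_2)-g(t_1)}{g(t_1)g(t_2)},
\]
using $g\ge\frac12\rho_0'(0)$, $|N|\le\frac12 C_5(A,T)+C_6(A,T)$, $|N(t_2)-N(t_1)|\le\bigl(\tfrac12 C_8(A,T)+\|J'\|_{\infty}\sqrt T\bigr)|t_2-t_1|^{1/2}$ and $|g(t_2)-g(t_1)|\le\|g'\|_{\infty}\sqrt T\,|t_2-t_1|^{1/2}$, I get $|Q[V]|_{1/2}\le C_8(A,T)/\rho_0'(0)+\widetilde C(A,T)\sqrt T$ for some $\widetilde C\in\mathcal{S}$.

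\textbf{Choice of constants and the main difficulty.} Because $C_8\in\mathcal{S}$, the value $\bar{c}_8:=C_8(A,0)=C_8(0,0)$ is independent of $A$, so I would fix $A:=2\bar{c}_8/\rho_0'(0)$; then, since $\widetilde C(A,T)$ is nondecreasing in $T$ and hence bounded as $T\to0^{+}$, the previous bound tends to $\bar{c}_8/\rho_0'(0)=A/2<A$ as $T\to0^{+}$. Thus there is $T_0=T_0(A)>0$ such that $|Q[V]|_{1/2}\le A$ for all $T\le T_0$ and all $V\in\Sigma(A,T)$, which together with the first step proves $Q[V]\in\Sigma(A,T)$. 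The main obstacle is exactly this ordering of quantifiers, since $A$ must be chosen before $T$ is shrunk: this works only because the term that does not vanish with $T$ has coefficient $C_8(A,T)/\rho_0'(0)$ whose $T\to0$ limit is independent of $A$ (the reason all constants are tracked in the class $\mathcal{S}$), and because the lower bound for $g$ degrades merely to $\rho_0'(0)$. The remaining technical point is the uniform-in-$V$ Lipschitz bound on $J$, which forces one to use the $u_{xx}$-estimate of Lemma \ref{lem.3} and the compact support of $p$ so that the tail integrals telescope.
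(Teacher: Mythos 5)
Your proposal is correct and follows essentially the same route as the paper: bound $|Q[V]|_{\frac{1}{2}}$ by combining Lemma \ref{lem.4} with Lipschitz bounds (obtained through the PDE) on $\int_0^\infty\int_0^\infty u\,p$ and on $g$, bound the denominator from below near $\varphi(0)=\rho_0'(0)$, and exploit that constants in $\mathcal{S}$ have $T\to0$ limits independent of $A$ so that $A$ can be fixed before $T$ is shrunk. The minor deviations — you check $Q[V](0)=V_0$ explicitly and bound $\frac{d}{dt}\int\int u\,p$ via the $u_{xx}$ estimate of Lemma \ref{lem.3} with tail telescoping, whereas the paper integrates by parts to get a bound involving only $u_x(0,t)$, $u(0,t)$, $\lVert u(\cdot,t)\rVert_{L^1}$ — do not change the argument; just state the lower bound $g\ge\frac12\rho_0'(0)$ uniformly in $V\in\Sigma(A,T)$ via $\lVert g'\rVert_\infty\le C(A,T)$ (as the paper does with $\inf|g|\ge|\varphi(0)|-TC(A,T)$), rather than from continuity of an individual $g$.
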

  \begin{proof}
First of all, let $A>0$ and $T>0$ be arbitrary numbers. For $V\in\Sigma(A,T)$, let $(V,u)$ as in Proposition $\ref{prop.2}$. Since $u(0,t)=g(t)$ and $\displaystyle |f|_{\frac{1}{2}}\leq \sqrt{T}\lVert f^{\prime} \rVert_{\infty}$ for all $\displaystyle f\in C^{1}([0,T])$, so we have
 \begin{eqnarray*}
&&\displaystyle \left|\frac{-\frac{1}{2}u_x(0,t)+\int_{0}^{\infty}\int_{0}^{\infty}u(y,t)p(y,x)dydx}{u(0,t)}\right|_{\frac{1}{2}}\\
&&\hspace{1cm}\leq \frac{\lVert g\rVert_{\infty}\left(\frac{1}{2}|u_x(0,\cdot)|_{\frac{1}{2}}+\sqrt{T}\sup_{0\leq t\leq T}\left|\int_{0}^{\infty}\int_{0}^{\infty} u_t(y,t)p(y,x)dydx \right|\right)}{(\inf_{[0,T]}|g|)^{2}}\\
&&\hspace{2cm}+\frac{\sqrt{T}\lVert g^{\prime}\rVert_{\infty}\left(\frac{1}{2}\lVert u_x(0,\cdot)\rVert_{\infty}+\sup_{0\leq t\leq T}\lVert u(\cdot,t) \rVert_{L^{1}}\right)}{(\inf_{[0,T]}|g|)^{2}}
 \end{eqnarray*}
We also have
 \begin{eqnarray*}
\displaystyle \left|\int_{0}^{\infty}\int_{0}^{\infty} u_t(y,t)p(y,x)dydx \right|\leq c_1|u_x(0,t)|+(c_2|V_t|+c_3)|u(0,t)|+(c_4|V_t|+c_5)\lVert u(\cdot,t)\rVert_{L^{1}}
 \end{eqnarray*}
and
\begin{eqnarray*}
\inf_{[0,T]}|g|\geq |\varphi(0)|-T\lVert g^{\prime}\rVert_{\infty}\geq |\varphi(0)|-TC(A,T)
\end{eqnarray*}
for some $C\in\mathcal{S}$ by \eqref{1313}. Then by previous lemmas, for some $\tilde{C}\in\mathcal{S}$,
\begin{eqnarray*}
\displaystyle \left|\frac{-\frac{1}{2}u_x(0,t)+\int_{0}^{\infty}\int_{0}^{\infty}u(y,t)p(y,x)dydx}{u(0,t)}\right|_{\frac{1}{2}}\leq \frac{\tilde{C}(A,T)}{(|\varphi(0)|-TC(A,T))^{2}}.
\end{eqnarray*}
Let us choose $\displaystyle A>\frac{\tilde{C}(\cdot,0)}{|\varphi(0)|^{2}}$ and then for all sufficiently small $T>0$, $Q:\Sigma(A,T)\longrightarrow\Sigma(A,T)$ is well-defined.
\end{proof}
  
\begin{lem}
\label{lem.6}
The map $Q$ defined in  Lemma \ref{lem.5} is continuous on $\Sigma(A,T)$ with sup norm.
\end{lem}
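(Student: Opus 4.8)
The plan is to prove that $Q$ is in fact Lipschitz continuous on $\Sigma(A,T)$ for the sup norm, which implies the stated continuity. Fix $V^{(1)},V^{(2)}\in\Sigma(A,T)$, let $\rho^{(i)},u^{(i)}$ be the solutions from Propositions \ref{prop.1} and \ref{prop.2}, and write $\delta V=V^{(1)}-V^{(2)}$, $\delta\rho=\rho^{(1)}-\rho^{(2)}$, $\delta u=u^{(1)}-u^{(2)}$, $\delta g=g^{(1)}-g^{(2)}$, $\delta g'=(g^{(1)})'-(g^{(2)})'$, where $g=u(0,\cdot)$. The structural point is that \eqref{intro.15} and \eqref{intro.17} are linear in $\rho$, resp. $u$, once $V$ is frozen: subtracting the two copies of the integral identities \eqref{145}, \eqref{123456789}, \eqref{6789}, one sees that $\delta\rho$, $\delta u$, $\delta u_x$ solve integral equations of the same form with vanishing initial data (the common datum $\rho_0$, $\varphi=\rho_0'$ drops out) and one extra inhomogeneous term in which $\delta V_\tau$ multiplies the quantities $\rho^{(2)}_\xi$, resp. $u^{(2)}_\xi$, $\delta g$, $\delta g'$, which are already controlled by Lemmas \ref{lem.2}--\ref{lem.3}. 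Hence the Gronwall and Lemma \ref{lem.1} machinery of those lemmas applies verbatim to the differences. For $\delta\rho$, running \eqref{14141}--\eqref{21} and then \eqref{141412} gives
\[
\|\delta\rho\|_\infty+\|\delta\rho_x\|_\infty+\sup_{0\le t\le T}\|\delta\rho(\cdot,t)\|_{L^1}\le C(A,T)\,\|\delta V\|_\infty
\]
for some $C\in\mathcal S$, whence $\|\delta g\|_\infty\le C(A,T)\|\delta V\|_\infty$ and, applying \eqref{1313} to the difference, $\|\delta g'\|_\infty\le C(A,T)\|\delta V\|_\infty$ as well.

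Next, the same argument applied to \eqref{123456789} and \eqref{6789} — now with $g$, $g'$ replaced by $\delta g$, $\delta g'$ and the extra forcing $\delta V_\tau u^{(2)}_\xi$, all of size $C(A,T)\|\delta V\|_\infty$ after the $\xi$-integration — yields
\[
\|\delta u\|_\infty+\|\delta u_x\|_\infty+\sup_{0\le t\le T}\|\delta u(\cdot,t)\|_{L^1}\le C(A,T)\,\|\delta V\|_\infty .
\]
For the boundary trace $u_x(0,\cdot)$ I would use its explicit representation from the proof of Lemma \ref{lem.4}: in the difference the $\varphi'$-term vanishes, the singular integral against $\delta g'$ is bounded by $c\sqrt{T}\,\|\delta g'\|_\infty$, and the term $\int_0^t(t-\tau)^{-1/2}\delta H(t,\tau)\,d\tau$ by $c\sqrt{T}\sup_\tau|\delta H(t,\tau)|$, with $\delta H$ built only from $\delta V$, $\delta u_\xi$, $\delta u$ and hence pointwise $\le C(A,T)\|\delta V\|_\infty$. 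Thus $\|\delta u_x(0,\cdot)\|_\infty\le C(A,T)\|\delta V\|_\infty$, and trivially $\| \int_0^\infty\!\int_0^\infty \delta u(y,\cdot)p(y,x)\,dy\,dx \|_\infty\le C(A,T)\|\delta V\|_\infty$ from $\int_0^\infty p(y,x)\,dx\le1$ and the $L^1$-bound on $\delta u$.

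Finally I would assemble: writing $N[V](t)=-\tfrac12 u_x(0,t)+\int_0^\infty\!\int_0^\infty u(y,t)p(y,x)\,dy\,dx$ so that $Q[V]=N[V]/g$, the above gives $\|N[V^{(1)}]-N[V^{(2)}]\|_\infty\le C(A,T)\|\delta V\|_\infty$ and $\|\delta g\|_\infty\le C(A,T)\|\delta V\|_\infty$, while $\|N[V^{(i)}]\|_\infty\le C(A,T)$ by Lemmas \ref{lem.2}--\ref{lem.4} and, exactly as in the proof of Lemma \ref{lem.5}, $\inf_{[0,T]}|g^{(i)}|\ge m:=|\varphi(0)|-TC(A,T)>0$ for $T$ small (recall $\varphi(0)=\rho_0'(0)>0$). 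Therefore
\[
\big|Q[V^{(1)}](t)-Q[V^{(2)}](t)\big|\le\frac{\|N[V^{(1)}]-N[V^{(2)}]\|_\infty}{m}+\frac{\|N[V^{(2)}]\|_\infty\,\|\delta g\|_\infty}{m^{2}}\le C(A,T)\,\|\delta V\|_\infty
\]
uniformly in $t\in[0,T]$, giving the asserted continuity. I expect the only genuine obstacle to be the continuity of the boundary derivative trace $u_x(0,\cdot)$: there one must re-run the delicate singular-kernel estimates of Lemmas \ref{lem.3}--\ref{lem.4} for the differences rather than differentiating naively under the integral; the rest is a routine, if lengthy, repetition of already-established bounds, made possible by the linearity of \eqref{intro.15} and \eqref{intro.17} in $\rho$ and $u$.
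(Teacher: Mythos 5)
Your proposal is correct and follows essentially the same route as the paper: both prove Lipschitz continuity in the sup norm by subtracting the integral representations, bounding $\delta\rho$, $\delta\rho_x$, the $L^1$ norms, $\delta g$, $\delta g'$, $\delta u$, $\delta u_x$ via Gronwall and Lemma \ref{lem.1}, and then assembling the quotient using the positive lower bound on $g$ from Lemma \ref{lem.5}. The only cosmetic difference is your separate treatment of the trace $u_x(0,\cdot)$ via the representation in Lemma \ref{lem.4}, which is redundant since the sup-norm bound on $\delta u_x$ already covers it.
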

\begin{proof}
Let $(V, \rho, u)$, $(\tilde{V}, \tilde{\rho}, \tilde{u})$ be two pairs as Proposition $\ref{prop.1}$ and $\ref{prop.2}$. Using abuse of notation, for each $C_i\in\mathcal{S}$, we will write simply $C_i$ instead of $C_i(A,T)$.\\
Using \eqref{3241} and taking a difference between $\rho$ and $\tilde{\rho}$, we have 
\begin{eqnarray*}
&&\displaystyle\sup_{x\geq0}|\rho(x,t)-\tilde{\rho}(x,t)|\leq c_1\sqrt{T}\lVert \rho\rVert_{\infty}\lVert V -\tilde{V}\rVert_{\infty}+c_2\lVert\tilde{V} \rVert_{\infty}\int_{0}^{t}\frac{\sup_{\xi\geq 0}|\rho(\xi,\tau)-\tilde{\rho}(\xi,\tau)|}{\sqrt{t-\tau}}d\tau\\
&&\hspace{4cm}+c_3\int_{0}^{t}\sup_{y\geq 0}|\rho(y,\tau)-\tilde{\rho}(y,\tau)|d\tau.
\end{eqnarray*}
Then similarly as before, by applying Gronwall's lemma and Lemma \ref{lem.1}, we obtain
\begin{eqnarray*}
\displaystyle \lVert\rho-\tilde{\rho}\rVert_{\infty}\leq C_1\lVert V -\tilde{V}\rVert_{\infty}.
\end{eqnarray*}
In addition,
\begin{eqnarray*}
&&\displaystyle\lVert\rho(\cdot,t)-\tilde{\rho}(\cdot,t)\rVert_{L^{1}}\leq c_4 \sqrt{T}\sup_{0\leq t\leq T}\lVert\rho(\cdot, t)\rVert_{L^{1}}\lVert V -\tilde{V}\rVert_{\infty}\\
&&\hspace{3.5cm}+c_5\lVert\tilde{V}\rVert_{\infty}\int_{0}^{t}\frac{\lVert\rho(\cdot,\tau)-\tilde{\rho}(\cdot,\tau)\rVert_{L^{1}}}{\sqrt{t-\tau}}d\tau+c_6\int_{0}^{t}\lVert\rho(\cdot,\tau)-\tilde{\rho}(\cdot,\tau)\rVert_{L^{1}}d\tau
\end{eqnarray*}
so that 
\begin{eqnarray*}
\displaystyle\sup_{0\leq t\leq T}\lVert\rho(\cdot,t)-\tilde{\rho}(\cdot,t)\rVert_{L^{1}}\leq C_2 \lVert V -\tilde{V}\rVert_{\infty}.
\end{eqnarray*}
Moreover, we also get
\begin{eqnarray*}
&&\displaystyle\sup_{x\geq0}|\rho_x(x,t)-\tilde{\rho}_x(x,t)|\leq c_7\sqrt{T}\lVert \rho_x\rVert_{\infty}\lVert V -\tilde{V}\rVert_{\infty}+c_{8}\lVert\tilde{V} \rVert_{\infty}\int_{0}^{t}\frac{\sup_{\xi\geq 0}|\rho_\xi(\xi,\tau)-\tilde{\rho}_\xi(\xi,\tau)|}{\sqrt{t-\tau}}d\tau\\
&&\hspace{4cm}+c_{9}\sqrt{T}\lVert \rho-\tilde{\rho} \rVert_{\infty}
\end{eqnarray*}
so that 
\begin{eqnarray*}
\displaystyle\sup_{x\geq0}|\rho_x(x,t)-\tilde{\rho}_x(x,t)|\leq C_3 \lVert V -\tilde{V}\rVert_{\infty}.
\end{eqnarray*}
By taking the difference of $u$ and $\tilde{u}$ written as \eqref{123456789}, we have
\begin{eqnarray*}
&&\displaystyle\sup_{x\geq0}|u(x,t)-\tilde{u}(x,t)|\leq \lVert g-\tilde{g} \rVert_{\infty}+c_{10}\sqrt{T}\lVert\tilde{V}\rVert_{\infty}\lVert g-\tilde{g} \rVert_{\infty}+c_{11}\sqrt{T}\lVert g \rVert_{\infty}\lVert V -\tilde{V}\rVert_{\infty}\\
&&\hspace{3.5cm}+c_{12}\sqrt{T}\lVert u\rVert_{\infty}\lVert V -\tilde{V}\rVert_{\infty}+c_{13}\lVert\tilde{V} \rVert_{\infty}\int_{0}^{t}\frac{\sup_{\xi\geq 0}|u(\xi,\tau)-\tilde{u}(\xi,\tau)|}{\sqrt{t-\tau}}d\tau\\
&&\hspace{3.5cm}+c_{14}\int_{0}^{t}\sup_{y\geq 0}|u(y,\tau)-\tilde{u}(y,\tau)|d\tau
\end{eqnarray*}
so that 
\begin{eqnarray*}
\displaystyle\sup_{x\geq0}|u(x,t)-\tilde{u}(x,t)|\leq C_4 \lVert V -\tilde{V}\rVert_{\infty}.
\end{eqnarray*}
Taking the difference between $\displaystyle g^{\prime}$ and $\displaystyle \tilde{g}^{\prime}$, we also have
\begin{eqnarray*}
&&\displaystyle|g^{\prime}(t)-\tilde{g}^{\prime}(t)|\leq c_{15}|\rho_x(0,t)-\tilde{\rho}_x(0,t)|+c_{16}|V_t|\lVert\rho(\cdot,t)-\tilde{\rho}(\cdot,t)\rVert_{L^{1}}+c_{17}\lVert\rho(\cdot,t)\rVert_{L^{1}}|V_t-V_t|\\
&&\hspace{2.5cm}+c_{18}\lVert\rho(\cdot,t)-\tilde{\rho}(\cdot,t)\rVert_{L^{1}}.
\end{eqnarray*}
so that 
\begin{eqnarray*}
\displaystyle \lVert g^{\prime}-\tilde{g}^{\prime}\rVert_{\infty}\leq C_5\lVert V -\tilde{V}\rVert_{\infty}.
\end{eqnarray*}
Taking the difference between $u_x$ and $\tilde{u}_x$ and using previous results, we deduce
\begin{eqnarray*}
&&\displaystyle\sup_{x\geq0}|u_x(x,t)-\tilde{u}_x(x,t)|\leq c_{19}\sqrt{T}\lVert g^{\prime}-\tilde{g}^{\prime} \rVert_{\infty}+c_{20}\sqrt{T}\lVert u_x\rVert_{\infty}\lVert V -\tilde{V}\rVert_{\infty}\\
&&\hspace{3cm}+c_{21}\lVert\tilde{V} \rVert_{\infty}\int_{0}^{t}\frac{\sup_{\xi\geq 0}|u_\xi(\xi,\tau)-\tilde{u}_\xi(\xi,\tau)|}{\sqrt{t-\tau}}d\tau+c_{22}\sqrt{T}\lVert u-\tilde{u} \rVert_{\infty}
\end{eqnarray*}
so that
\begin{eqnarray*}
\displaystyle \lVert u_x-\tilde{u}_x\rVert_{\infty}\leq C_6\lVert V -\tilde{V}\rVert_{\infty}.
\end{eqnarray*}
In a similar way as before and using the estimates above, we obtain
\begin{eqnarray*}
&&\displaystyle\lVert u(\cdot,t)-\tilde{u}(\cdot,t)\rVert_{L^{1}}\leq c_{23}\sqrt{T}\lVert g-\tilde{g}\rVert_{\infty}+c_{24}T\lVert g \rVert_{\infty}\lVert V -\tilde{V}\rVert_{\infty}+c_{25}T\lVert\tilde{V}\rVert_{\infty}\lVert g-\tilde{g}\rVert_{\infty}\\
&&\hspace{3.5cm}+c_{26}\sqrt{T}\lVert u\rVert_{L^{1}}\lVert V -\tilde{V}\rVert_{\infty}+c_{27}\lVert\tilde{V}\rVert_{\infty}\int_{0}^{t}\frac{\lVert u(\cdot,\tau)-\tilde{u}(\cdot,\tau)\rVert_{L^{1}}}{\sqrt{t-\tau}}d\tau\\
&&\hspace{3.5cm}+c_{28}\int_{0}^{t}\lVert u(\cdot,\tau)-\tilde{u}(\cdot,\tau)\rVert_{L^{1}}d\tau
\end{eqnarray*}
so that
\begin{eqnarray*}
\displaystyle\sup_{0\leq t\leq T}\lVert u(\cdot,t)-\tilde{u}(\cdot,t)\rVert_{L^{1}}\leq C_7\lVert V -\tilde{V}\rVert_{\infty}.
\end{eqnarray*}
Finally we conclude that
\begin{eqnarray*}
\displaystyle \lVert Q[V]-Q[\tilde{V}] \rVert_{\infty}\leq C_8\lVert V -\tilde{V}\rVert_{\infty}=C_8\lVert V-\tilde{V}\rVert_{\infty}.
\end{eqnarray*}
\end{proof}

\begin{proof} \hspace{-1.5mm}{\bf{of Theorem \ref{3424567}}}\\
$Q$ is continuous, then, since $\Sigma(A,T)$ is convex and compact we can apply the Schauder fixed point theorem to conclude that $Q$ has a fixed point. This completes the proof.
\end{proof}

\vskip 2cm

\section{Proof of Theorem \ref{thmintro.1}}
\label{sec.6}
Let $(V,\rho, u)$ in Theorem \ref{3424567} and let us define $\displaystyle\tilde{\rho}(x,t):=-\int_{x}^{\infty}u(y,t)dy$. We will show that  $\displaystyle\tilde{\rho}$ is the unique solution in Proposition \ref{prop.1}. Since $u$ satisfies
\begin{eqnarray*}
\frac{d}{dt}\left(\int_{0}^{\infty}u(y,t)dy\right)=-\frac{1}{2}u_x(0,t)-V_tu(0,t)+\int_{0}^{\infty}\int_{0}^{\infty}dzu(z,t)p(z,y)dy=0,
\end{eqnarray*}
thus we have $\displaystyle\tilde{\rho}(0,t)=-\int_{0}^{\infty}u(y,t)dy=-\int_{0}^{\infty}\rho_0^{\prime}(y)dy=0$.\\
Then $\tilde{\rho}$ satisfies
\begin{eqnarray*}
&&\displaystyle\tilde{\rho}_t(x,t)=-\int_{x}^{\infty}u_t(y,t)dy=\frac{1}{2}u_x(x,t)+V_tu(x,t)-\int_{x}^{\infty}\int_{0}^{\infty}dz\tilde{\rho}_z(z,t)p(z,y)dy\\
&&\hspace{1.5cm}=\frac{1}{2}\tilde{\rho}_{xx}(x,t)+V_t\tilde{\rho}_x(x,t)+\int_{x}^{\infty}\int_{0}^{\infty}dz\tilde{\rho}(z,t)p_z(z,y)dy\\
&&\hspace{1.5cm}=\frac{1}{2}\tilde{\rho}_{xx}(x,t)+V_t\tilde{\rho}_x(x,t)-\int_{0}^{\infty}dz\tilde{\rho}(z,t)\int_{x}^{\infty}p_y(z,y)dy\\
&&\hspace{1.5cm}=\frac{1}{2}\tilde{\rho}_{xx}(x,t)+V_t\tilde{\rho}_x(x,t)+\int_{0}^{\infty}dz\tilde{\rho}(z,t)p(z,x).
\end{eqnarray*}
Since $\displaystyle\tilde{\rho}(x,0)=\rho_0(x)$, by the uniqueness of Proposition \ref{prop.1}, we also get $\rho=\tilde{\rho}$ such that
\begin{eqnarray}
\label{21415}
\displaystyle\tilde{\rho}_x(0,t)=u(0,t)=2\int_{0}^{\infty}\int_{0}^{\infty}dy\rho(y,t)p(y,x)dx=2\int_{0}^{\infty}\int_{0}^{\infty}dy\tilde{\rho}(y,t)p(y,x)dx.
\end{eqnarray}
By \eqref{21415} and $\tilde{\rho}(0,t)=0$, we have $\displaystyle\frac{d}{dt}\left(\int_{0}^{\infty}\tilde{\rho}(y,t)dy\right)=0$ and finally deduce that
 \begin{eqnarray}
 \displaystyle \int_{0}^{\infty}\tilde{\rho}(x,t)dx=\int_{0}^{\infty}\rho_0(x)dx=1.
 \end{eqnarray}
This completes the proof.

\vskip 2cm

\section{Further results}
Let us try to apply our $\displaystyle C^{1}$-argument to the FBP of \cite{BBD17} as follows:
\[
(\star)\begin{cases}

     \displaystyle \rho_t(x,t)=\frac{1}{2}\rho_{xx}(x,t)+\rho,     & \quad \text{if } X_t<x,\ \ t> 0,\\

      \displaystyle \rho(X_t,t)=\alpha,     & \quad \text{if } t \ge 0,\\
     
    \displaystyle \rho_x(X_t,t)=\beta,  & \quad \text{if } t> 0.\\
\displaystyle \rho(x,0)=\rho_0(x),  & \quad \text{if } 0\leq x,\\
  \end{cases}
\]
where $\rho_0$ is specified later.\\
If $\alpha\neq 0$ and $\beta=0$, then $\displaystyle\frac{d}{dt}\rho(X_t,t)=V_t\rho_x(X_t,t)+\rho_t(X_t,t)=\frac{1}{2}\rho_{xx}(X_t,t)+\alpha=0$ so that, by change of variable $u:=\rho_x$, $(\star)$ becomes
\[
(\star\star)\begin{cases}

     \displaystyle u_t(x,t)=\frac{1}{2}u_{xx}(x,t)+u,     & \quad \text{if } X_t<x,\ \ t> 0,\\

      \displaystyle u(X_t,t)=0,     & \quad \text{if } t \ge 0,\\
     
    \displaystyle u_x(X_t,t)=-2\alpha,  & \quad \text{if } t> 0.\\
\displaystyle u(x,0)=\rho_0^{\prime}(x),  & \quad \text{if } 0\leq x,\\
  \end{cases}
\]
Again by change of variable $\displaystyle v:=-\frac{1}{2\alpha}u_x$, $(\star\star)$ becomes $(\star\star\star)$
\[
(\star\star\star)\begin{cases}

     \displaystyle v_t(x,t)=\frac{1}{2}v_{xx}(x,t)+v,     & \quad \text{if } X_t<x,\ \ t> 0,\\

      \displaystyle v(X_t,t)=1,     & \quad \text{if } t \ge 0,\\
     
     \displaystyle v(x,0)=-\frac{1}{2\alpha}\rho_0^{\prime\prime}(x),  & \quad \text{if } 0\leq x,\\
  
    \displaystyle  V_t=-\frac{1}{2}v_x(X_t,t),  & \quad \text{if } t> 0.\\
  \end{cases}
\]
To make each step valid, it needs that the value of the initial datum at $0$ is same as the boundary value which is $\rho_0(0)=\alpha$, $\rho_0^{\prime}(0)=0$, $\rho_0^{\prime\prime}(0)=-2\alpha$ and $\displaystyle\rho_{0}$ should be $\displaystyle C_c^{4}([0,\infty))$ to have $v_{xx}$ of $(\star\star\star)$ as in Lemma \ref{lem.3}.\\
Similarly as before, we can shift the boundary X to $0$ so that we have the following equivalent FBP:
\[
(\star\star\star^{\prime})\begin{cases}

     \displaystyle v_t(x,t)=\frac{1}{2}v_{xx}(x,t)+V_tv_x(x,t)+v(x,t)     & \quad \text{if } 0<x,\ \ t> 0,\\

      \displaystyle v(0,t)=1,     & \quad \text{if } t \ge 0,\\
     
     \displaystyle v(x,0)=-\frac{1}{2\alpha}\rho_0^{\prime\prime}(x),  & \quad \text{if } 0\leq x,\\
  
    \displaystyle  V_t=-\frac{1}{2}v_x(0,t),  & \quad \text{if } t> 0.\\
  \end{cases}
\]
By writing $v$ as
\begin{eqnarray}
\label{1234567}
&&v(x,t)=-\int_{0}^{t}K_x(x,t;0,\tau)d\tau+\int_{0}^{\infty}G(x,t;\xi,0)\psi(\xi)d\xi \nonumber \\
&&\hskip 2cm+\int_{0}^{t}\int_{0}^{\infty}G(x,t;\xi,\tau)\left[V_\tau v_{\xi}(\xi,\tau)+v(\xi,\tau)\right]d\xi d\tau,
\end{eqnarray}
where $\displaystyle \psi(\xi)=-\frac{1}{2\alpha}\rho_0^{\prime\prime}(\xi)$, we can repeat the same argument as previous sections such that there is a pair $(X,v)$ which satisfies $(\star\star\star^{\prime})$.\\
If $\alpha=0$, $\beta\neq 0$, it can be done similarly as $(\alpha\neq 0$, $\beta=0)$-case with the initial condition $\displaystyle\rho_{0}\in C_c^{3}([0,\infty))$ such that $\rho_0(0)=0$, $\rho_0^{\prime}(0)=\beta$.

{\bf Acknowledgments.}
I thank A. De Masi and E. Presutti for useful discussions.

\end{document}